\newtheorem{theorem}{\bf Theorem}[section]
\newtheorem{corollary}[theorem]{Corollary}
\newtheorem{lemma}[theorem]{Lemma}
\newcommand{\qed}{\hfill $\square$ \bigskip}
\newcommand{\bt}{\boxtimes}
\begin{document}

\title{Toll number of the strong product of graphs}

\author{Tanja Gologranc $^{a,b}$ \and
Polona Repolusk $^{a}$}

\maketitle

\begin{center}
$^a$ Faculty of Natural Sciences and Mathematics, University of Maribor, Slovenia\\
{\tt tanja.gologranc1@um.si\\
polona.repolusk@um.si}\\
\medskip

$^b$ Institute of Mathematics, Physics and Mechanics, Ljubljana, Slovenia\\
\end{center}

\begin{abstract}
A tolled walk $T$ between two non-adjacent vertices $u$ and $v$ in a graph $G$ is a walk, in which $u$ is adjacent only to the second vertex of $T$ and $v$ is adjacent only to the second-to-last vertex of $T$. A toll interval between $u,v\in V(G)$ is a set $T_G(u,v)=\{x\in V(G)~|~x \textrm{ lies on a tolled walk between } u \textrm{\, and\,} v\}$. A set $S \subseteq V(G)$ is toll convex, if $T_{G}(u,v)\subseteq S$ for all $u,v\in S$. A toll closure of a set $S \subseteq V(G)$ is the union of toll intervals between all pairs of vertices from $S$. The size of a smallest set $S$ whose toll closure is the whole vertex set is called a toll number of a graph $G$, $tn(G)$. 
This paper investigates the toll number of the strong product of graphs.  First, a description of  toll intervals between two vertices in the strong product graphs is given. Using this result we characterize graphs with $tn(G \bt H)=2$ and graphs with $tn(G \bt H)=3$, which are the only two possibilities. As an addition, for the t-hull number of $G\bt H $ we show that  $th(G \boxtimes H) = 2$ for any non complete graphs $G$ and $H$. As extreme vertices play an important role in different convexity types, we show that no vertex of the strong product graph of two non complete graphs is an extreme vertex with respect to the toll convexity.

 \bigskip\noindent \textbf{Keywords:} toll convexity, toll number, strong product\\

	\bigskip\noindent {\bf 2010 Mathematical Subject Classification}: 05C12, 52A01 05C76\\

\end{abstract}

\newpage

\section{Introduction}\label{Int}

The classical theory of convexity is based on three natural conditions, imposed on a family of subsets of a given set. All three axioms hold in the interval convexity, which was emphasized in~\cite{vel-93} as one of the most natural ways for introducing convexity. An interval $I:X \times X \rightarrow 2^X$ has the property that $x,y\in I(x,y)$, and convex sets are defined as the sets $S$ in which all intervals between elements from $S$ lie in $S$. In terms of graph theory, several interval structures have been introduced. The interval function $I$ is most commonly defined by a set of paths between two vertices, where these paths have some interesting properties. For instance, shortest paths yield geodesic intervals, induced paths yield monophonic intervals etc. Each type of an interval then gives rise to the corresponding convexity, see~\cite{chmusi-05,Pela} for some basic types of intervals/convexities. 

A type of a graph convexity, called toll convexity, was introduced in \cite{abg}. The definition of a tolled walk generalizes the one of monophonic and geodesic paths, as any geodesic path is also a monophonic path, and any monophonic path is also a tolled walk.  Authors of the first paper on the topic \cite{abg} focused their attention on interval graphs and applied the concept from~\cite{al-14}, where this family of graphs was characterized in terms of tolled walks. They used tolled walks from~\cite{al-14} to define a type of convexity for which exactly interval graphs are convex geometries (convex sets can be build from its extreme elements). The same property is known also for other types of convexities. In the case of monophonic convexity, exactly
chordal graphs are convex geometries, while in the geodesic convexity, these are precisely Ptolemaic graphs (i.e. distancehereditary chordal graphs), see~\cite{fj-86}.
 
Authors also considered other properties of the toll convexity followed by results, already known for other types of convexities. They investigated the toll number and the t-hull number of a graph, which were investigated in terms of the geodesic convexity about 30 years ago~\cite{Evertt85,hlt} and intensively studied after that, for instance in  graph products~\cite{bkh, bkt,CC}, in terms of other types of convexities~\cite{cz,stg} and more. See~\cite{CHZ02,cpz,Pela} for further reading on this topic. 

The toll number was investigated in terms of the Cartesian and the lexicographic product of graphs. t-convex sets of these two products were described in \cite{abg,gr} and in \cite{gr} it was shown that $tn(G \Box H)=2$, and if $H$ is not isomorphic to a complete graph, $tn(G \circ H) \leq 3\cdot tn(G)$. There is also an exact formula for $tn(G \circ H)$ - it is described in terms of the so-called toll-dominating triples.

In terms of the strong product of graphs, the geodetic and the hull number were investigated in \cite{strong1,strong2,strong3}, where bounds for the geodetic and hull number of the strong product of graphs were found, and some exact values for different families of graphs, which include strong products of paths and complete graphs with a cycle, products of complete and complete bipartite graphs. In \cite{abg}, t-convex sets of $G \bt H$ were characterized. Our paper takes into consideration the toll number of the strong product of graphs. We proceed as follows.

First we list all necessary definitions for our work. Section \ref{interval} is devoted to describing vertices that lie on a toll interval between two vertices in $G \bt H$. It turns out that if  $x_1x_2 \notin E(G), y_1y_2\notin E(H)$, the toll interval between $(x_1,y_1),(x_2,y_2) \in V(G \bt H)$ contains all vertices of $V(G \bt H)$, except maybe some neighbors of $(x_1,y_1),(x_2,y_2)$. The last part of this section focuses on extreme vertices with respect to the toll convexity. We show that if $G$ and $H$ are not complete graphs, then a graph $G \boxtimes H$ has no extreme vertices. Using results of this section helps build results of Section \ref{s:tn}, where a complete description of the toll number of the strong product of graphs is given. We show that $tn(G\bt H) \leq 3$ and characterize graphs with  $tn(G\bt H) = 2$. Consequently, we also get a characterization of those with  $tn(G\bt H) = 3$. In Concluding remarks we finish with quite a straightforward result concerning t-hull number of the strong product, $th(G \boxtimes H) = 2$ for non complete graphs $G$ and $H$.

\section{Preliminaries}\label{Pre}

All graphs, considered in this paper, are finite, simple, non-trivial (i.e. graphs with at least two vertices), connected and without multiple edges or loops.

Let $G=(V(G),E(G))$ be a graph. The {\it distance} $d_G(u,v)$ between vertices $u,v\in V(G)$ is the length of a shortest path between $u$ and $v$ in $G.$  
The  {\it diameter} of a graph, $diam(G)$, is defined as  $\displaystyle diam(G)=\max\lbrace d_G(u,v)~|~ u,v\in V(G)\rbrace$.
Sometimes we will call vertices $u$ and $v$, for which $d_G(u,v)=diam(G)$, diametral vertices of a graph $G$.
The  {\it eccentricity} of a vertex $v \in V(G)$, $e_G(v)$, is defined as  $ e_G(v)=\max \lbrace d_G(v,x) ~|~ x \in V(G) \rbrace$. A vertex $x$ is said to be eccentric with respect to a vertex $v$, if $d_G(x,v)=e_G(v)$. The set of all eccentric vertices of a vertex $v$ is denoted with $ecc_G(v)$.

The {\it{geodesic interval}} $I_{G}(u,v)$ between vertices $u$ and $v$ is the set of all vertices that lie on some shortest path between $u$ and $v$ in $G$, i.e. $I_{G}(u,v)=\{x\in V(G) ~|~ d_G(u,x)+d_G(x,v)=d_G(u,v)\}$. A subset $S$ of $V(G)$ is {\it{geodesically convex}} (or {\it g-convex}) if $I_{G}(u,v)\subseteq S$ for all $u,v\in S$. Let $S$ be a set of vertices of a graph $G$. Then the geodetic closure $I_G \left[ S \right]$ is the union of geodesic intervals between all pairs of vertices from $S$, that is, $I_G \left[ S \right] = \bigcup_{u,v \in S}I_G(u,v)$. A set $S$ of vertices of $G$ is a {\it geodetic set} in $G$ if $I_G \left[ S \right] = V (G)$. The size of a minimum geodetic set in a graph $G$ is called the {\it geodetic number} of $G$ and denoted by $g(G)$. Given a subset $S \subseteq V(G)$, the {\it convex hull} $\left[ S \right]$ of $S$ is the smallest convex set that contains $S$. We say that $S$ is a {\it hull set} of $G$ if $\left[ S \right]=V(G)$. The size of a minimum hull set of $G$ is the {\it hull number} of $G$, denoted $hn(G)$. Indices above may be omitted, whenever the graph $G$ is clear from the context.

All definitions, listed above for the geodesic convexity, could be rewritten in terms of monophonic convexity, all-path convexity, Steiner convexity etc. For more details see surveys~\cite{bkt-10,cpz}, the book~\cite{Pela} and the paper~\cite{Steiner-spa}. In the rest of the paper, the  term convexity will always stand for the so-called toll convexity, unless we will say otherwise.

Let $u$ and $v$ be two different, non-adjacent vertices of a graph $G$. A {\it tolled walk} $T$ between $u$ and $v$ in $G$ is a sequence of vertices of the form $T: u,w_1,\ldots,w_k,v,$ where $k\ge 1$, which enjoys the following three conditions:
\begin{itemize}
\item $w_iw_{i+1}\in E(G)$ for all $i$,
\item $uw_i\in E(G)$ if and only if $i=1$,
\item $vw_i\in E(G)$ if and only if $i=k$.
\end{itemize}
In other words, a tolled walk is any walk between $u$ and $v$ such that $u$ is adjacent only to the second vertex of the walk and $v$ is adjacent only to the second-to-last vertex of the walk. 
For $uv\in E(G)$ let $T:u,v$ be a tolled walk as well. The only tolled walk that starts and ends in the same vertex $v$ is $v$ itself. 
We define $T_G(u,v)=\{x\in V(G) ~|~ x \textrm{ lies on a tolled walk between } u \textrm{\, and\,} v\}$ to be the {\it toll interval} between $u$ and $v$ in $G$. Finally, a subset $S$ of $V(G)$ is {\it{toll convex}} (or {\it t-convex}) if $T_{G}(u,v)\subseteq S$ for all $u,v\in S$. The {\it toll closure} $T_G[S]$ of a subset $S\subseteq V(G)$ is the union of toll intervals between all pairs of vertices from $S$, i.e. $T_G[S]=\bigcup_{u,v\in S} T_G(u,v)$. If $T_G[S]=V(G)$, we call $S$ a {\it toll set} of a graph $G$. The size of a minimum toll set in $G$ is called the {\it toll number} of $G$ and is denoted by $tn(G)$. Again, when the graph is clear from the context, indices may be omitted.

A {\it t-convex hull} of a set $S\subseteq V(G)$ is defined as the intersection of all t-convex sets that contain $S$ and is denoted by $[S]_t$. A set $S$ is a {\it t-hull set} of $G$ if its t-convex hull $[S]_t$ coincides with $V(G)$. The {\it t-hull number} of $G$ is the size of a minimum t-hull set and is denoted by $th(G)$.
Given the toll interval $T_G \,:\, V \times V \rightarrow 2^V$ and a set $S\subset V(G)$ we define $T_G^k[S]$ as follows: $T_G^0[S]=S$ and $T_G^{k+1}[S]=T_G[T_G^k[S]]$ for any $k\geq 1$. Note that $[S]_t= \bigcup_{k \in \mathbb{N}} T_G^k[S]$.  From definitions above we immediately infer that every toll set is a t-hull set, and hence $th(G)\leq tn(G)$ for any graph $G$.

A vertex $s$ from a convex set $S$ in a graph $G$ is an {\em extreme vertex} of $S$, if $S-\{s\}$ is also convex in $G$. Thus, also extreme vertices can be defined in terms of different graph convexities. Considering the geodesic and the monophonic convexity, the extreme vertices are exactly simplicial vertices, i.e.\ vertices, whose closed neighborhoods induce a complete graph. 
For the toll convexity, any extreme vertex is also a simplicial, but the converse is not necessarily true, see~\cite{abg}. The set of all extreme vertices of a graph $G$, denoted  $Ext(G)$, is contained in any toll set of $G$. Even more, it is contained in any t-hull set of $G$, i.e.\ $|Ext(G)| \leq th(G) \leq tn(G)$.
The assertion holds also in other types of convexities~\cite{CHZ02,Evertt85}.

We follow the definitions of graph products by the book \cite{ImKl}. The vertex set of the strong product $G \bt H$ of graphs $G$ and $H$ is equal to $V(G)\times V(H)$. Vertices $(g_{1},h_{1})$ and $(g_{2},h_{2})$ are adjacent in $G \bt H$  if either ($g_1g_2\in E(G)$ and $h_1=h_2$) or ($g_1=g_2$ and $h_1h_2\in E(H)$) or ($g_1g_2 \in E(G)$ and $h_1h_2 \in E(H)$). For a vertex $h\in V(H)$, we call the set $G^{h}=\{(g,h)\in V(G \bt H)~|~g\in V(G)\}$ a $G$-\emph{layer} of $G \bt H$. By abuse of notation we also consider $G^{h}$ as the corresponding induced subgraph. Clearly $G^{h}$ is isomorphic to $G$. For $g\in V(G)$, the $H$-\emph{layer} $^g\!H$ is defined as $^g\!H =\{(g,h)\in V(G \bt H) ~|~ h\in V(H)\}$. We also consider $^g\!H$ as an induced subgraph and note that it is isomorphic to $H$. 
For more details on graph products see~\cite{ImKl}.

\section{Toll intervals in $G \bt H$}\label{interval}

This section will be devoted to describe toll intervals between two vertices in the strong product graphs. More precisely, we will prove that if  $x_1x_2 \notin E(G), y_1y_2\notin E(H)$, the toll interval between $(x_1,y_1),(x_2,y_2) \in V(G \bt H)$ contains all vertices of $V(G \bt H)$, except maybe some neighbors of $(x_1,y_1),(x_2,y_2)$. For the sake of clarity we divided this investigation into several lemmas. The final result will play an important role in the rest of the paper.

\begin{lemma}\label{l:main1}
Let $(x_1,y_1),(x_2,y_2) \in V(G \boxtimes H)$, where $x_1x_2\notin{E(G)}$, $y_1y_2 \notin E(H)$. If $x \in T_G(x_1,x_2)$ and $y \in T_H(y_1,y_2)$, then $(x,y) \in T_{G \boxtimes H}\left((x_1,y_1),(x_2,y_2)\right)$.
\end{lemma}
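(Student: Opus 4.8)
The plan is to realize $(x,y)$ on a single tolled walk of $G \boxtimes H$ built by \emph{synchronizing} a tolled walk of $G$ through $x$ with a tolled walk of $H$ through $y$, exploiting the diagonal edges available in the strong product. I would fix a tolled walk $P: x_1 = a_0, a_1, \dots, a_p = x_2$ in $G$ with $x = a_i$, and a tolled walk $Q: y_1 = b_0, b_1, \dots, b_q = y_2$ in $H$ with $y = b_j$. Because $x_1 x_2 \notin E(G)$ and $y_1 y_2 \notin E(H)$, both endpoint pairs are non-adjacent, so each of $P$ and $Q$ has at least one internal vertex; that is, $p \ge 2$ and $q \ge 2$. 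This simple fact is what will rule out the degenerate short configurations at the ends of the constructed walk.

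I would then think of any \emph{monotone lattice path} from $(0,0)$ to $(p,q)$ in $\mathbb{Z}^2$, where the step out of a lattice point $(m,n)$ may be \emph{right} to $(m+1,n)$, \emph{up} to $(m,n+1)$, or \emph{diagonal} to $(m+1,n+1)$; each such step corresponds to a genuine edge of $G\boxtimes H$ joining $(a_m,b_n)$ to the next vertex, since $a_m a_{m+1}\in E(G)$, $b_n b_{n+1}\in E(H)$, and in the strong product one may advance one coordinate while keeping, or simultaneously advancing, the other. A monotone path through $(i,j)$ exists because $0\le i\le p$ and $0\le j\le q$, and it visits each lattice point at most once, so it yields a walk $W$ from $(x_1,y_1)$ to $(x_2,y_2)$ passing through $(a_i,b_j)=(x,y)$. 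What remains is to choose the path so that $W$ is actually \emph{tolled}.

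The crucial ingredient is a clean description of which vertices are adjacent to the two endpoints. From the tolled property of $P$ together with $x_1\ne x_2$ one gets that $a_0=a_m$ forces $m=0$ and $a_0 a_m\in E(G)$ forces $m=1$ (if $a_0$ coincided with, or were adjacent to, some later $a_m$, then the tolling conditions of $P$ at the neighbours $a_{m-1},a_{m+1}$ of $a_m$ would be violated), and symmetrically for $a_p$ and for $Q$. Hence in $G\boxtimes H$ the endpoint $(a_0,b_0)$ is adjacent to a walk vertex $(a_m,b_n)$ \emph{only} when its lattice point lies in $\{(1,0),(0,1),(1,1)\}$, and $(a_p,b_q)$ only when it lies in $\{(p-1,q),(p,q-1),(p-1,q-1)\}$. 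Therefore, if I choose the path so that its first step is diagonal (second lattice point $(1,1)$) and its last step is diagonal (second-to-last lattice point $(p-1,q-1)$), then by monotonicity no other lattice point of the path can fall in either forbidden set, and both tolling conditions hold, with the second and second-to-last vertices playing exactly their required roles. For an interior target, $1\le i\le p-1$ and $1\le j\le q-1$, such a path clearly exists (go diagonally off $(0,0)$, route monotonically through $(i,j)$, and finish with a diagonal step into $(p,q)$), and the lemma follows in this case.

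The main obstacle, and the part that will need genuine care, is the boundary cases in which $x\in\{x_1,x_2\}$ or $y\in\{y_1,y_2\}$, so that $(i,j)$ lies on an edge of the lattice rectangle and the first or last step cannot be made diagonal while still passing through $(i,j)$. Here I would replace the offending diagonal by the appropriate straight segment, moving only \emph{up} when $i=0$ and only \emph{right} when $j=0$, and dually at the $(p,q)$ end, and then reverify the two tolling conditions against the forbidden sets above. This is exactly where $p\ge 2$ and $q\ge 2$ are essential: they guarantee that such a straight boundary segment is long enough that its vertices, which carry an extremal $G$- or $H$-coordinate only together with a non-extremal partner coordinate, never re-enter the forbidden sets, so that precisely one walk vertex remains adjacent to each endpoint. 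The corner cases $(i,j)\in\{(0,0),(0,q),(p,0),(p,q)\}$ are then either trivial (the target is one of the endpoints) or dispatched by the same straight-segment device, which completes the argument.
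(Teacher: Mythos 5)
Your proposal is correct, but it takes a genuinely different route from the paper's. The paper proves the lemma by a case analysis on the adjacencies of $x$ to $x_1,x_2$ and on whether $y\in\{y_1,y_2\}$: it concatenates the subwalk $P_1$ of a tolled walk $P$ inside the layer $G^{y_1}$, all of $Q$ inside $^x\!H$, and $P_2$ inside $G^{y_2}$, and when $x_1x\in E(G)$ or $xx_2\in E(G)$ it repairs the tolling condition by shortcutting along a diagonal edge such as $(x_1,y_1)(x,z_1)$, with a separate construction ($P$ in $G^{y}$ followed by $Q$ in $^{x_2}\!H$) when $y=y_1$ or $y=y_2$. You instead build a single synchronized staircase walk $(a_m,b_n)$ over a monotone lattice path with right, up and diagonal steps, and your key structural observation --- that the tolling \emph{if and only if} conditions force $a_m=a_0$ or $a_0a_m\in E(G)$ exactly for $m\in\{0,1\}$, dually $m\in\{p-1,p\}$ at the other end, and this holds at the level of indices even though tolled walks may repeat vertices --- localizes every possible endpoint adjacency to a $2\times 2$ corner of the lattice, so that tolling of the whole walk is checked by a one-line monotonicity argument instead of case-by-case inspection. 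What each approach buys: the paper's layerwise template is reused almost verbatim in Lemmas~\ref{l:main2b}--\ref{l:main4}, where one factor coordinate is \emph{not} in the corresponding toll interval and a shortest walk replaces a tolled one, so the authors amortize their construction across the section; your argument is more self-contained and uniform over all interior positions $1\le i\le p-1$, $1\le j\le q-1$, but pushes the difficulty into the boundary cases $x\in\{x_1,x_2\}$ or $y\in\{y_1,y_2\}$ --- exactly where the paper also branches. There your recipe is stated a bit loosely: with $i=0$ and $j=1$, for instance, the vertical start segment must be prolonged through $(0,2)$ before the path turns right, lest it visit $(1,1)$ and give the start vertex a second neighbour on the walk; your appeal to $p,q\ge 2$ (``the segment is long enough'') does cover this, but the departure rule deserves to be written out explicitly, as do the corner targets such as $(0,q)$, which your L-shaped routing handles since the second-to-last vertex $(p-1,q)$ is then precisely the unique walk vertex adjacent to $(x_2,y_2)$.
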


\begin{proof}
Let $P$ be an $x_1,x_2$-tolled walk that contains $x$ and $Q$ a $y_1,y_2$-tolled walk that contains $y$. Denote also $P_1$ to be a subwalk of $P$ from $x_1$ to $x$ and $P_2$ a subwalk of $P$ from $x$ to $x_2$. 

Assume first that $xx_1 \notin E(G)$ and $xx_2 \notin E(G)$. Then, by concatenating walks $P_1$ in the layer $G^{y_1}$, $Q$ in the layer $^x\!H$ layer and $P_2$ in the layer $G^{y_2}$ we get an $(x_1,y_1),(x_2,y_2)$-tolled walk that contains $(x,y)$.  

Denote for a moment $z_1$ to be a vertex on $Q$, that is adjacent to $y_1$ and $z_2$ a vertex on $Q$, that is adjacent to $y_2$. If $x_1x \in E(G)$ and $y \notin \lbrace y_1,y_2 \rbrace$, then the walk, described above, must be shortened in a way that we replace a subwalk $(x_1,y_1),\ldots,(x,y_1),(x,z_1)$ with  $(x_1,y_1)(x,z_1)$. This then gives a desired tolled walk. If $xx_2 \in E(G)$, then a similar replacement of a walk $(x,z_2),(x,y_2),\ldots ,(x_2,y_2)$ with $(x,z_2)(x_2,y_2)$ gives a tolled walk containing the vertex $(x,y)$. 

Now let  $y =y_1$. Note that $yy_2 \notin E(G)$. Then a concatenation of a walk $P$ in the layer $G^{y}$ together with a walk $Q$ in the layer $^{x_2}\!H$ is a desired tolled walk. The proof for $y=y_2$ is similar. \qed
\end{proof}

\begin{lemma}\label{l:main2b}
Let $(x_1,y_1),(x_2,y_2) \in V(G \boxtimes H)$, where $x_1x_2\notin{E(G)}$, $y_1y_2 \notin E(H)$. Let $x \notin T_G(x_1,x_2)$ and $y \in T_H(y_1,y_2)$. If  $(x,y)\in V(G \boxtimes H)$ such that $(x,y) \notin N((x_1,y_1))  \cup N((x_2,y_2))$, then $(x,y) \in T_{G \boxtimes H}\left((x_1,y_1),(x_2,y_2)\right)$.
\end{lemma}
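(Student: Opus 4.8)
The plan is to build an explicit tolled walk in $G \boxtimes H$ from $(x_1,y_1)$ to $(x_2,y_2)$ passing through $(x,y)$, exploiting the fact that the two endpoint conditions of a tolled walk can be controlled entirely through the $H$-coordinate. Concretely, fix a tolled $y_1,y_2$-walk $Q: y_1=q_0,q_1,\dots,q_m=y_2$ in $H$ with $y=q_j$, and recall that its defining properties give $q_i$ adjacent to $y_1$ only for $i=1$ and $q_i$ adjacent to $y_2$ only for $i=m-1$; in particular $q_i\notin N(y_1)\cup\{y_1\}$ for $i\ge 2$ and $q_i\notin N(y_2)\cup\{y_2\}$ for $i\le m-2$. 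The key observation is that a vertex $(g,q_i)$ of $G\boxtimes H$ can be adjacent to $(x_1,y_1)$ only if $q_i\in N(y_1)\cup\{y_1\}$, \emph{regardless of} $g$; hence, as long as the $H$-coordinate stays in the ``safe'' range $\{q_2,\dots,q_{m-2}\}$, the $G$-coordinate may move completely freely without ever creating a neighbor of $(x_1,y_1)$ or of $(x_2,y_2)$. This is exactly what lets us bypass the hypothesis $x\notin T_G(x_1,x_2)$: in the $G$-direction we only ever need connectivity, never tolledness.

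In the generic situation, where $y=q_j$ with $2\le j\le m-2$, I would traverse the walk in four stages: first move the $H$-coordinate up along $Q$ from $y_1$ to $q_2$ (two steps, the first of which, $(x_1,q_1)$, is the unique designated neighbor of $(x_1,y_1)$); then, parking $H$ at $q_2$, walk the $G$-coordinate along an arbitrary $x_1,x$-path (available since $G$ is connected); next move $H$ along $Q$ up to $q_j=y$ with $G$ parked at $x$, thereby reaching $(x,y)$; symmetrically, continue $H$ up to $q_{m-2}$, then walk $G$ along an arbitrary $x,x_2$-path, and finally move $H$ from $q_{m-2}$ through $q_{m-1}$ to $y_2$, the vertex $(x_2,q_{m-1})$ being the unique designated neighbor of $(x_2,y_2)$. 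Every intermediate vertex produced while the $G$-coordinate moves has $H$-coordinate in the safe range, so by the observation above the only neighbor of $(x_1,y_1)$ on the walk is $(x_1,q_1)$ and the only neighbor of $(x_2,y_2)$ is $(x_2,q_{m-1})$; consecutive vertices are adjacent by construction, so this is a tolled walk through $(x,y)$.

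The boundary cases, where $y$ is close to an endpoint in $H$, are where the hypothesis $(x,y)\notin N((x_1,y_1))\cup N((x_2,y_2))$ earns its keep. If $y\in N(y_1)\cup\{y_1\}$ (so $y\in\{y_1,q_1\}$ and the safe-range argument is unavailable at $y$), then, since $(x,y)$ is not adjacent to $(x_1,y_1)$, we are forced to have $x\notin N(x_1)\cup\{x_1\}$; but then the \emph{entire} layer $^{x}\!H$ is non-adjacent to $(x_1,y_1)$, so once the $G$-coordinate reaches $x$ we may drive $H$ down to $y$ freely. I would therefore route as in the generic case up to $(x,q_2)$ and then dip in $H$ to $(x,y)$ inside $^{x}\!H$ before climbing back, using this layer-safety in place of range-safety; the symmetric situation $y\in N(y_2)\cup\{y_2\}$ forces $x\notin N(x_2)\cup\{x_2\}$ and is handled the same way at the other end.

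I expect the main obstacle to be the most degenerate configurations, where $Q$ is short (small $m$, so the safe range $\{q_2,\dots,q_{m-2}\}$ is empty or a single vertex) and $y$ is simultaneously close to both $y_1$ and $y_2$; there $x\notin N(x_1)\cup\{x_1\}$ and $x\notin N(x_2)\cup\{x_2\}$ are both forced, and one must choose the second and second-to-last vertices together with an escape route in $G$ out of $N(x_1)\cup\{x_1\}$ with care, so that the $H$-values $q_1$ and $q_{m-1}$ occur on the walk only at the two designated positions. Verifying both endpoint conditions simultaneously in these cases --- in particular guaranteeing a first step into a neighbor of $(x_1,y_1)$ from which one can immediately leave the closed neighborhood of $(x_1,y_1)$ --- is the delicate part; everything else reduces to the safe-range and safe-layer bookkeeping described above.
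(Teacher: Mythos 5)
Your generic construction is sound, and it takes a genuinely different route from the paper: you use the tolled walk $Q\colon y_1=q_0,q_1,\dots,q_m=y_2$ in $H$ as a spine whose interior positions $q_2,\dots,q_{m-2}$ provide ``safe'' $H$-levels at which the $G$-coordinate may move along \emph{arbitrary} paths, whereas the paper never relaxes control in $G$: it fixes a \emph{shortest} $x_1,x_2$-walk through $x$ (so that neighbors of $x_1$ occur only after $x$ and neighbors of $x_2$ only before $x$), runs its first half in the layer $G^{y_1}$, inserts $Q$ in the layer $^{x}\!H$, runs the second half in $G^{y_2}$ (the levels $y_1,y_2$ blocking adjacency to the opposite endpoint since $y_1y_2\notin E(H)$), and finishes with diagonal shortcuts when $xx_1\in E(G)$ or $xx_2\in E(G)$. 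For $m\geq 4$ your routing, including the dip into $^{x}\!H$ when $y\in N(y_1)\cup\{y_1\}$ (where you correctly deduce $x\notin N[x_1]$), checks out.

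The degenerate cases you flag but leave unproved are, however, a genuine gap and not routine bookkeeping. When $m\leq 3$ the safe range is empty, and for $m=3$ even your boundary recipe breaks: there $q_2=q_{m-1}\in N(y_2)$, so running an arbitrary $x_1,x$-path at level $q_2$ may create many neighbors of $(x_2,y_2)$. These cases cannot be avoided by choosing a longer $Q$: for $H=P_3$ with $y_1,y_2$ the leaves, the \emph{only} tolled $y_1,y_2$-walk is $y_1,q_1,y_2$, so $m=2$ is forced. Moreover, your claim that in the degenerate configurations both $x\notin N(x_1)\cup\{x_1\}$ and $x\notin N(x_2)\cup\{x_2\}$ are forced is false when $y\in\{y_1,y_2\}$: for $y=y_1$ only $x\notin N[x_1]$ follows (non-adjacency of $(x,y_1)$ to $(x_2,y_2)$ is automatic from $y_1\notin N[y_2]$ and constrains $x$ not at all), and $x$ may be adjacent to $x_2$ --- e.g.\ when $N_G[x]\subseteq N_G[x_2]$ --- so a vertical approach to the layer $G^{y_2}$ fails and a diagonal finish $(x,z_2),(x_2,y_2)$ is needed. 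Closing all of this requires exactly the ingredient your sketch omits: replacing ``arbitrary path'' by a shortest $x_1,x$-path (or the paper's shortest walk through $x$), so that its only neighbor of $(x_1,y_1)$, when run in $G^{y_1}$, is its second vertex, with the symmetric device at the other end; with that mechanism (plus the diagonal shortcuts) the cases $m\leq 3$ go through, but as written your argument does not.
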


\begin{proof}
Let $P$ be a shortest $x_1,x_2$-walk that contains $x$ and $Q$ a $y_1,y_2$-tolled walk that contains $y$. As $P$ is not a tolled walk, there must be at least two vertices on $P$, adjacent either to $x_1$ or $x_2$. Without loss of generality, assume there are at least two vertices on $P$, adjacent to $x_1$.
Denote $P=x_1,p_0,\ldots,p_k,x,p_{k+1},\ldots,p_l,x_2$. If $x_1p_i\in E(G)$ for some $i \in \lbrace 0,\ldots,k\rbrace$, then $P$ could be shortened, a contradiction. Therefore,  if $x_1p_i\in E(G)$, then $i \in \lbrace k+1,\ldots, l \rbrace$. Also note that $p_i\neq x_2$ because $x_1x_2 \notin E(G)$.
Similar arguments lead to the fact that if $p_jx_2 \in E(G)$, then $j\in \lbrace 0,\ldots,k \rbrace$ and $p_j \neq x_1$.
Let us define the following walks:
\begin{itemize}
\item $W_1$: $x_1,p_0,\ldots,p_k,x$ in the layer $G^{y_1}$, 
\item $W_2$: $Q$ in the layer $^x\!H$,  and 
\item $W_3$: $x,p_{k+1},\ldots,p_l,x_2$ in the layer $G^{y_2}$.
\end{itemize}

As $x \notin T_G(x_1,x_2)$, $x_1x$ and $xx_2$ are not both edges in $G$. If $x_1x \notin E(G)$ and $xx_2 \notin E(G)$, then, by concatenating $W_1$, $W_2$ and $W_3$, we get a tolled $(x_1,y_1),(x_2,y_2)$-walk that contains $(x,y)$. 

Assume now that $x_1x \in E(G)$ (note that in this case $xx_2 \notin E(G)$). Since $(x,y)$ is not adjacent to $(x_1,y_1)$, $y \notin N_H[y_1]$. 
Let $z_1$ be a vertex on $Q$, that is adjacent to $y_1$ and $z_2$ a vertex on $Q$, that is adjacent to $y_2$. 
Then the walk $(x_1,y_1),(x,z_1)$, a subwalk of $W_2$ between $(x,z_1)$ and $(x,y_2)$, and the walk $W_3$ together form a desired tolled walk. Similarly, if $xx_2 \in E(G)$, then $y \notin N_H[y_2]$, as $(x,y)(x_2,y_2) \notin E(G\bt H)$. Then a walk $W_1$, a subwalk of $W_2$ between $(x,y_1)$ and $(x,z_2)$ and a walk $(x,z_2),(x_2,y_2)$ together form a desired tolled walk.\qed
\end{proof}

\begin{lemma}\label{l:main2}
Let $(x_1,y_1),(x_2,y_2) \in V(G \boxtimes H)$, where $x_1x_2\notin E(G)$, $y_1y_2 \notin E(H)$. If $x \notin T_G(x_1,x_2)$ and $y \in T_H(y_1,y_2)-\{y_1,y_2\}$, then $(x,y) \in T_{G \boxtimes H}\left((x_1,y_1),(x_2,y_2)\right)$.
\end{lemma}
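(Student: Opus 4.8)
The plan is to treat this as a refinement of Lemma~\ref{l:main2b}: the hypotheses here are exactly those of Lemma~\ref{l:main2b} except that $y$ is now an \emph{interior} vertex of the $H$-tolled walk ($y\neq y_1,y_2$) while the non-adjacency condition on $(x,y)$ has been dropped. So first I would dispose of the easy case. Since $T_H(y_1,y_2)-\{y_1,y_2\}\subseteq T_H(y_1,y_2)$, if $(x,y)\notin N((x_1,y_1))\cup N((x_2,y_2))$ the conclusion is immediate from Lemma~\ref{l:main2b}. It remains to handle the case when $(x,y)$ is adjacent to an endpoint. I would first observe that $(x,y)$ cannot be adjacent to both: $x\notin T_G(x_1,x_2)$ forces $x\neq x_1,x_2$, so adjacency to both endpoints would give $x_1x,xx_2\in E(G)$ and hence the tolled walk $x_1,x,x_2$ in $G$, contradicting $x\notin T_G(x_1,x_2)$. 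Thus, up to the symmetry exchanging the two endpoints, I may assume $(x,y)\in N((x_1,y_1))$.

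Next I would extract the structural consequences of this adjacency. Since $x\neq x_1$ and $y\neq y_1$, adjacency of $(x,y)$ and $(x_1,y_1)$ yields $x_1x\in E(G)$ and $y_1y\in E(H)$. Combined with $x\notin T_G(x_1,x_2)$ this also gives $xx_2\notin E(G)$ (otherwise $x_1,x,x_2$ would again be tolled), so $x\notin N_G[x_2]$. Now fix a tolled walk $Q\colon y_1,q_1,\dots,q_m,y_2$ in $H$ through $y$; because $y_1y\in E(H)$ and $Q$ is tolled ($y_1q_i\in E(H)$ iff $i=1$), the vertex $y$ must be the second vertex $q_1$ of $Q$. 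A short argument using tolledness and $y_1y_2\notin E(H)$ shows moreover that $q_i\neq y_1$ and $q_iy_1\notin E(H)$ for every $i\geq 2$, i.e.\ $q_2,\dots,q_m\notin N_H[y_1]$; and of course $y_2\notin N_H[y_1]$.

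I would then build the walk explicitly. Let $x,p_{k+1},\dots,p_l,x_2$ be a geodesic from $x$ to $x_2$ in $G$ (the $x$-to-$x_2$ portion of a shortest $x_1,x_2$-walk through $x$, as in Lemma~\ref{l:main2b}); since $xx_2\notin E(G)$ it has length at least two. Consider
\[
W\colon (x_1,y_1),(x,q_1),(x,q_2),\dots,(x,q_m),(x,y_2),(p_{k+1},y_2),\dots,(p_l,y_2),(x_2,y_2),
\]
which starts with the edge $(x_1,y_1)(x,y)$, follows $Q$ inside the layer $^x\!H$, and then follows the geodesic inside the layer $G^{y_2}$. Consecutive vertices are adjacent, and $(x,y)=(x,q_1)$ lies on $W$ as its second vertex.

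The heart of the proof --- and the step I expect to be the main obstacle --- is checking that $W$ is \emph{tolled}. For the endpoint $(x_1,y_1)$ I would argue that no vertex of $W$ other than the second is adjacent to it: the layer-$^x\!H$ vertices $(x,q_2),\dots,(x,q_m),(x,y_2)$ all have second coordinate outside $N_H[y_1]$, and every layer-$G^{y_2}$ vertex has second coordinate $y_2\notin N_H[y_1]$, so adjacency is impossible in both blocks. For the endpoint $(x_2,y_2)$ I would use $x\notin N_G[x_2]$ to rule out all layer-$^x\!H$ vertices (their first coordinate is $x$), while along the geodesic only the penultimate vertex $p_l$ is adjacent to $x_2$; hence the only vertex of $W$ adjacent to $(x_2,y_2)$ is the second-to-last one $(p_l,y_2)$. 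This verifies both tolledness conditions, so $W$ is an $(x_1,y_1),(x_2,y_2)$-tolled walk through $(x,y)$, and the symmetric case $(x,y)\in N((x_2,y_2))$ follows by exchanging the roles of the two endpoints.
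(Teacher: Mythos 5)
Your proof is correct and follows essentially the same route as the paper: after the same reduction to Lemma~\ref{l:main2b} and the same deductions $xx_1\in E(G)$, $yy_1\in E(H)$, $xx_2\notin E(G)$, your walk --- the diagonal edge $(x_1,y_1)(x,y)$, then the $y,y_2$-portion of $Q$ in the layer $^{x}\!H$, then a shortest $x,x_2$-path in $G^{y_2}$ --- is exactly the concatenation the paper uses. The only difference is that you spell out the tolledness verification (in particular that $y$ must be the second vertex of $Q$ and that $q_2,\ldots,q_m\notin N_H[y_1]$), which the paper leaves implicit.
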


\begin{proof}
If $(x,y) \notin N((x_1,y_1))  \cup N((x_2,y_2))$, then the result follows by Lemma~\ref{l:main2b}. Hence we may assume without loss of generality that $(x,y)$ is adjacent to $(x_1,y_1)$. 
Since $y \neq y_1$, $yy_1 \in E(H)$ and because $x \notin T_G(x_1,x_2)$, $x \neq x_1$. By the definition of adjacency in the strong product it follows that $xx_1 \in E(G)$.
Also, note that $xx_2 \notin E(G)$, otherwise $x_1,x,x_2$ would be a tolled $x_1,x_2$-walk that contains $x$. 

Let $P$ be a shortest $x,x_2$-path and $Q$ a $y_1,y_2$-tolled walk that contains $y$. Denote also $Q_1$ to be a subwalk of $Q$ between $y$ and $y_2$.  We construct a tolled $(x_1,y_1),(x_2,y_2)$-walk that contains $(x,y)$ as follows. Concatenate a walk $(x_1,y_1),(x,y)$ with $Q_1$ in $^{x}\!H$ and $P$ in $G^{y_2}$.  \qed 

\end{proof}

%
%

By the symmetry of the strong product we have the following two lemmas:

\begin{lemma}\label{l:main3b}
Let $(x_1,y_1),(x_2,y_2) \in V(G \boxtimes H)$, where $x_1x_2\notin{E(G)}$, $y_1y_2 \notin E(H)$. Let $x \in T_G(x_1,x_2)$ and $y \notin T_H(y_1,y_2)$. If  $(x,y)\in V(G \boxtimes H)$ such that $(x,y) \notin N((x_1,y_1))  \cup N((x_2,y_2))$, then $(x,y) \in T_{G \boxtimes H}\left((x_1,y_1),(x_2,y_2)\right)$.
\end{lemma}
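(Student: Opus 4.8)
By the symmetry of the strong product, Lemma~\ref{l:main3b} is the mirror image of Lemma~\ref{l:main2b}: here the roles of the two coordinates are interchanged, so that $x$ now plays the ``good'' role ($x \in T_G(x_1,x_2)$) while $y$ plays the ``bad'' role ($y \notin T_H(y_1,y_2)$). The plan is therefore to reproduce the construction from the proof of Lemma~\ref{l:main2b} with the coordinates swapped. I would begin by fixing a $y_1,y_2$-walk $Q$ that is shortest among those containing $y$, and an $x_1,x_2$-tolled walk $P$ that contains $x$. Since $Q$ is shortest but not tolled (as $y \notin T_H(y_1,y_2)$), there must be at least two vertices on $Q$ adjacent to $y_1$ or to $y_2$; without loss of generality assume at least two are adjacent to $y_1$.

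Next I would establish the analogue of the structural observation in Lemma~\ref{l:main2b}: writing $Q = y_1,q_0,\ldots,q_k,y,q_{k+1},\ldots,q_l,y_2$, the fact that $Q$ is a shortest such walk forces every neighbor of $y_1$ on $Q$ to occur \emph{after} $y$ (i.e. among $q_{k+1},\ldots,q_l$), since a neighbor of $y_1$ appearing before $y$ would allow $Q$ to be shortened, and symmetrically every neighbor of $y_2$ must occur before $y$; moreover no $q_i$ equals $y_2$ or $y_1$ because $y_1y_2 \notin E(H)$. I would then define three walks in the appropriate layers: $W_1 = y_1,q_0,\ldots,q_k,y$ in the layer $^{x_1}\!H$, $W_2 = P$ in the layer $G^{y}$, and $W_3 = y,q_{k+1},\ldots,q_l,y_2$ in the layer $^{x_2}\!H$.

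The main case analysis then mirrors the one in Lemma~\ref{l:main2b}, driven by whether $y$ is adjacent to $y_1$ or $y_2$. Since $y \notin T_H(y_1,y_2)$, not both $y_1y$ and $yy_2$ are edges of $H$. When neither is an edge, concatenating $W_1$, $W_2$, $W_3$ directly yields a tolled $(x_1,y_1),(x_2,y_2)$-walk through $(x,y)$. When $y_1y \in E(H)$ (so $yy_2 \notin E(H)$), the hypothesis that $(x,y)$ is not adjacent to $(x_1,y_1)$ forces $x \notin N_G[x_1]$; letting $t_1,t_2$ be vertices of $P$ adjacent to $x_1,x_2$ respectively, I would splice the edge $(x_1,y_1),(t_1,y)$ onto the relevant subwalk of $W_2$ and onto $W_3$ to obtain the tolled walk, and symmetrically for $yy_2 \in E(H)$.

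The step I expect to require the most care is the symmetric bookkeeping in these adjacency subcases: one must verify that the spliced initial and terminal edges respect the two toll conditions (that $(x_1,y_1)$ touches the walk only at its second vertex and $(x_2,y_2)$ only at its second-to-last), which is exactly where the non-adjacency hypotheses $(x,y) \notin N((x_1,y_1)) \cup N((x_2,y_2))$ and the placement of the $y_1$- and $y_2$-neighbors along $Q$ are used. Beyond this, the argument is a routine transcription, so I would simply remark that the proof is symmetric to that of Lemma~\ref{l:main2b} and present only the coordinate substitution and the verification of these boundary conditions.
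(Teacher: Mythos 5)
Your proposal is correct and coincides with the paper's own treatment: the paper proves Lemma~\ref{l:main3b} purely by the remark ``by the symmetry of the strong product'' applied to Lemma~\ref{l:main2b}, which is exactly the coordinate-swapping argument you carry out. Your explicit transcription (shortest walk $Q$ through $y$, tolled walk $P$ through $x$, layers $^{x_1}\!H$, $G^{y}$, $^{x_2}\!H$, and the spliced diagonal edges in the cases $y_1y \in E(H)$ or $yy_2 \in E(H)$, using $(x,y) \notin N((x_1,y_1)) \cup N((x_2,y_2))$ to force $x \notin N_G[x_1]$ resp.\ $x \notin N_G[x_2]$) is a faithful and correct instantiation of that symmetry.
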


\begin{lemma}\label{l:main3}
Let $(x_1,y_1),(x_2,y_2) \in V(G \boxtimes H)$, where $x_1x_2\notin{E(G)}$, $y_1y_2 \notin E(H)$. If $x \in T_G(x_1,x_2)-\{x_1,x_2\}$ and $y \notin T_H(y_1,y_2)$, then $(x,y) \in T_{G \boxtimes H}\left((x_1,y_1),(x_2,y_2)\right)$.
\end{lemma}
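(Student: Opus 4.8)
The plan is to derive this statement from Lemma~\ref{l:main2} by invoking the commutativity of the strong product, rather than repeating the walk construction from scratch. The map $\varphi \colon V(G \boxtimes H) \to V(H \boxtimes G)$ given by $\varphi(g,h) = (h,g)$ is a graph isomorphism, because the three adjacency clauses defining the strong product are symmetric in the two factors: each condition relating $(g_1,g_2)$ and $(h_1,h_2)$ is preserved verbatim when the coordinates are transposed. Consequently $\varphi$ (and its inverse) carries tolled walks to tolled walks and hence preserves toll intervals.

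First I would transport the data of the lemma across $\varphi$. The vertices $(x_1,y_1),(x_2,y_2),(x,y)$ become $(y_1,x_1),(y_2,x_2),(y,x)$ in $H \boxtimes G$. I would then apply Lemma~\ref{l:main2} to the product $H \boxtimes G$, reading $H$ in the role of the first factor and $G$ in the role of the second. The four hypotheses match exactly: $y_1y_2 \notin E(H)$ and $x_1x_2 \notin E(G)$ are the two non-adjacency assumptions; the condition ``first coordinate outside the toll interval'' becomes $y \notin T_H(y_1,y_2)$, which is given; and the condition ``second coordinate strictly inside the toll interval'' becomes $x \in T_G(x_1,x_2) - \{x_1,x_2\}$, again given. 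Lemma~\ref{l:main2} therefore yields $(y,x) \in T_{H \boxtimes G}\big((y_1,x_1),(y_2,x_2)\big)$.

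Finally I would pull this conclusion back through $\varphi^{-1}$. Since toll intervals are invariant under isomorphism, the membership $(y,x) \in T_{H \boxtimes G}\big((y_1,x_1),(y_2,x_2)\big)$ is equivalent to $(x,y) \in T_{G \boxtimes H}\big((x_1,y_1),(x_2,y_2)\big)$, which is precisely the desired statement. I do not expect a genuine obstacle here; the only point requiring care is the bookkeeping, namely checking that under the transposition $G \leftrightarrow H$, $x_i \leftrightarrow y_i$, $x \leftrightarrow y$ the four hypotheses of Lemma~\ref{l:main2} land exactly on the four hypotheses of the present lemma, with the roles of ``inside'' and ``outside'' the toll interval interchanged. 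If one preferred not to cite commutativity explicitly, the alternative would be to mirror the construction in the proof of Lemma~\ref{l:main2} verbatim, building the walk from an $x_1,x_2$-tolled walk through $x$ together with a shortest $y_1,y_2$-path through $y$ laid out across the appropriate $G$- and $H$-layers; the symmetry argument simply packages this repetition.
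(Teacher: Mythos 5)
Your proposal is correct and takes essentially the same route as the paper, which gives no separate walk construction for this statement but derives it from Lemma~\ref{l:main2} precisely ``by the symmetry of the strong product,'' i.e.\ the commutativity isomorphism $(g,h)\mapsto(h,g)$ you make explicit. Your bookkeeping check that the transposed hypotheses of Lemma~\ref{l:main2} land exactly on those of Lemma~\ref{l:main3} is an accurate elaboration of that one-line justification.
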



\begin{lemma}\label{l:main4}
Let $(x_1,y_1),(x_2,y_2) \in V(G \boxtimes H)$, where $x_1x_2\notin{E(G)}$, $y_1y_2 \notin E(H)$. Let $x \notin T_G(x_1,x_2)$ and $y \notin T_H(y_1,y_2)$. If  $(x,y)\in V(G \boxtimes H)$ such that $(x,y) \notin N((x_1,y_1))  \cup N((x_2,y_2))$, then $(x,y) \in T_{G \boxtimes H}\left((x_1,y_1),(x_2,y_2)\right)$.
\end{lemma}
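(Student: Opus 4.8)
The plan is to construct, in the remaining case where neither coordinate lies in its factor's toll interval, an explicit tolled $(x_1,y_1),(x_2,y_2)$-walk through $(x,y)$ by splicing shortest paths of the two factors and routing them through suitable layers. First I would fix a shortest $x_1,x$-path $P_1\colon x_1=a_0,a_1,\dots,a_s=x$ and a shortest $x,x_2$-path $P_2\colon x=b_0,\dots,b_t=x_2$ in $G$, and likewise shortest $y_1,y$- and $y,y_2$-paths $Q_1\colon y_1=c_0,\dots,c_p=y$ and $Q_2$ in $H$. The single structural fact I would use repeatedly is that on a shortest path the endpoint is adjacent only to its immediate neighbour; thus $x_1$ is adjacent on $P_1$ only to $a_1$ and $x_2$ is adjacent on $P_2$ only to $b_{t-1}$, and symmetrically for $Q_1,Q_2$. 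The point is that I do \emph{not} need $P_1P_2$ (resp.\ $Q_1Q_2$) to be a tolled walk --- that is exactly what fails here --- but only that each half is a shortest path, so each factor-endpoint sees only the one neighbour on \emph{its own} half.

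The second ingredient is a short adjacency bookkeeping. Writing $\alpha_i$ for ``$x\in N_G[x_i]$'' and $\beta_i$ for ``$y\in N_H[y_i]$'', the hypotheses give that at most one of $\alpha_1,\beta_1$ holds (else $(x,y)\in N((x_1,y_1))$), at most one of $\alpha_2,\beta_2$ holds, and --- since otherwise $x_1,x,x_2$ would be a tolled walk witnessing $x\in T_G(x_1,x_2)$ --- not both $\alpha_1,\alpha_2$; symmetrically not both $\beta_1,\beta_2$. Hence the true ones among $\alpha_1,\beta_1,\alpha_2,\beta_2$ form an independent set in the $4$-cycle $\alpha_1\beta_1\beta_2\alpha_2$, leaving, up to the $G\leftrightarrow H$ and $1\leftrightarrow 2$ symmetries of $\bt$, only three cases: none holds; exactly one (say $\alpha_1$); or the diagonal pair $\{\alpha_1,\beta_2\}$.

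The core of the argument is the routing, done by an ``$L$-shape'' near each endpoint. Near $(x_1,y_1)$, if $\alpha_1$ holds I would leave along the $^{x}\!H$-layer, i.e.\ $(x_1,y_1),(x,c_1),\dots,(x,y)$ using $Q_1$; since the first coordinate is the constant $x\in N_G[x_1]$, tolledness needs the second coordinate to leave $N_H[y_1]$, which it does after $c_1$ because $Q_1$ is shortest and $y=c_p\notin N_H[y_1]$ (as $\neg\beta_1$). If instead $\beta_1$ holds I use the mirror construction in $G^{y}$, and if neither holds I run $P_1$ inside $G^{y_1}$ and then turn into $^{x}\!H$, the turn being harmless because $x\notin N_G[x_1]$. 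The half from $(x,y)$ to $(x_2,y_2)$ is built symmetrically from $P_2,Q_2$ so that the second-to-last vertex is the unique neighbour of $(x_2,y_2)$. What makes the two halves compatible is the decoupling from $x_1x_2\notin E(G)$ and $y_1y_2\notin E(H)$: any vertex whose first coordinate is $x_2$ (or whose second coordinate is $y_2$) is automatically non-adjacent to $(x_1,y_1)$, and dually; combined with the chosen $L$-shapes this removes every stray adjacency. In each representative case I would then read off the walk and verify the two tolledness conditions by the checklist above, the other cases following by symmetry.

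The step I expect to be the genuine obstacle is the diagonal case $\{\alpha_1,\beta_2\}$: here $x$ is close to $x_1$ but far from $x_2$, while $y$ is far from $y_1$ but close to $y_2$, so the naive Lemma~\ref{l:main2b}-type construction (one factor-walk placed entirely in a single middle layer) breaks, since the non-tolled second half of that walk would produce extra neighbours of an endpoint. The fix --- the thing to get exactly right --- is to fix the \emph{near} coordinate at each end ($x$ near $(x_1,y_1)$, $y$ near $(x_2,y_2)$) and let the \emph{far} coordinate vary along a genuine shortest path, so that each endpoint is shielded either by the ``wrong layer'' ($y_2\notin N[y_1]$, $x_1\notin N[x_2]$) or by the shortest-path adjacency bound; the only delicate verification is that these two local choices do not interfere with each other in the middle of the walk.
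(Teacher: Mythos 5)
Your proposal is correct, and it reaches the lemma by a genuinely different decomposition than the paper, though the underlying technique (splicing factor paths into layer walks, with diagonal shortcut edges near the endpoints) is the same. The paper organizes its cases around the structure of the non-tolled shortest $x_1,x_2$-walk through $x$: it splits on whether $xx_1\in E(G)$ and on the existence of the witnesses $p_{i_x}$, $p_{j_x}$ (extra neighbours of $x_1$, $x_2$ on the wrong half of $P$), chooses extremal such indices, and builds backtracking segments such as $x_1,p_{i_x},\ldots,p_{k+1},x$ in $G^{y_1}$. You instead case on the flags $\alpha_1,\alpha_2,\beta_1,\beta_2$, observe that the hypotheses enter only through the four forbidden pairs (so the true flags form an independent set of the $4$-cycle $\alpha_1\beta_1\beta_2\alpha_2$), and reduce by the factor-swap and endpoint-swap symmetries to three cases, each settled with the shortest paths $P_1,P_2,Q_1,Q_2$ alone; the witnesses and smallest/largest-index bookkeeping are replaced by your shielding observations ($y,y_2\notin N_H[y_1]$, $x_1\notin N_G[x_2]$, or shortest-path adjacency at an endpoint). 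In the diagonal case $\{\alpha_1,\beta_2\}$ your walk coincides exactly with the paper's final case ($W_1$ followed by $W_2'$); in the other cases your walks are simpler, and your route makes visible that $x\notin T_G(x_1,x_2)$ and $y\notin T_H(y_1,y_2)$ are used only via $x\neq x_1,x_2$, $y\neq y_1,y_2$ and the exclusions $\neg(\alpha_1\wedge\alpha_2)$, $\neg(\beta_1\wedge\beta_2)$. One point to pin down when writing it out --- precisely the interference you flagged --- is that in the singleton case $\{\alpha_1\}$ the far-end $L$-shape is forced: you must leave $(x,y)$ along $P_2$ in $G^{y}$ and only then climb $Q_2$ in $^{x_2}\!H$; the mirror option ($Q_2$ in $^{x}\!H$, then $P_2$ in $G^{y_2}$) fails because an interior vertex of $Q_2$ may lie in $N_H[y_1]$ while $xx_1\in E(G)$, giving $(x_1,y_1)$ a second neighbour. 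Your own verification checklist detects this and the correct orientation survives it, so the plan goes through.
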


\begin{proof}
Let $x \notin T_G(x_1,x_2)$ and $y \notin T_H(y_1,y_2)$. Note that therefore $x \notin \lbrace x_1,x_2\rbrace$ and $y \notin \lbrace y_1,y_2\rbrace$.
Denote $P=x_1,p_0,\ldots,p_k,x,p_{k+1},\ldots,p_l,x_2$ to be a shortest $x_1,x_2$-walk that contains $x$ and $Q=y_1,q_0,\ldots,q_a,y,q_{a+1},\ldots,q_b,y_2$ a shortest $y_1,y_2$-walk that contains $y$. 
Therefore there exist at least one of $\lbrace i_x,j_x\rbrace$ and at least one of $\lbrace i_y,j_y\rbrace$, such that:
\begin{itemize}
\item $x_1p_{i_x} \in E(G)$ and $i_x \in \lbrace k+1,\ldots,l\rbrace$,
\item $x_2p_{j_x} \in E(G)$ and $j_x \in \lbrace 0,\ldots,k\rbrace$,
\item $y_1q_{i_y} \in E(H)$ and $i_y \in \lbrace a+1,\ldots,b\rbrace$,
\item $y_2q_{j_y} \in E(H)$ and $j_y \in \lbrace 0,\ldots,a\rbrace$.
\end{itemize}
Similarly as in Lemma \ref{l:main2b}, note that $i_x \notin \lbrace 0,\ldots,k\rbrace$ because $P$ was chosen to be the shortest walk between $x_1$ and $x_2$, which passes through $x$. Similar holds for $i_y,j_x,j_y$.
Let $i_x,i_y$ be smallest indices and $j_x,j_y$ the biggest indices, such that the upper terms hold.

Assume first that $x_1x \notin  E(G)$ and assume that there exists $p_{i_x}$ as described above. Since $x \notin T_G(x_1,x_2)$,
 $xx_2 \notin E(G)$.  Let $W_1$ be a walk $x_1,p_{i_x},\ldots,p_{k+1},x$ in the layer $G^{y_1}$ and $W_2$ the walk $Q$ in the layer $^x\!H$. Note that no vertex of $^{x_1}\!H$ is adjacent to a vertex of $^x\!H$ and the same holds for vertices of $^{x_2}\!H$. Let finally $W_3$ be a walk $x,p_{k+1}\ldots,p_l,x_2$ in the layer $G^{y_2}$. As $y_1y_2 \notin E(G)$, no vertex of $W_3$ is adjacent to $(x_1,y_1)$. By concatenating walks $W_1,W_2$ and $W_3$ we get an $(x_1,y_1),(x_2,y_2)$-tolled walk $W$ that contains $(x,y)$. 

For the second case, assume that  $x_1x \notin  E(G)$ but there does not exist $p_{i_x}$ as described above. Therefore there must be $p_{j_x}$ such that $x_2p_{j_x} \in E(G)$ and $j_x \in \lbrace 0,\ldots,k\rbrace$. 
If $xx_2 \notin E(G)$, then a walk 
$W_1$,  which is $x_1,p_0,\ldots,p_k,x$ in the layer $G^{y_1}$,
$W_2$, which is $Q$ in the layer  $^{x}\!H$ and 
$W_3$, which is a walk $x, p_{k},\ldots,p_{j_x},x_2$ in the layer $G^{y_2}$ 
alltogether give a desired tolled walk. 
Assume now that $xx_2 \in E(G)$. Note that we still have $xx_1 \notin E(G)$ and there does not exist $p_{i_x}$ as described above. If also $yy_2 \in E(H)$, we have $(x,y)\in N\left((x_2,y_2)\right)$, which is not true by the assumption. Therefore $yy_2 \notin E(H)$.
Define 
$W_1$,  to be a walk $y_1,q_0,\ldots,q_a,y$ in the layer $^{x_1}\!H$,
$W_2$, which is $x_1,p_0,\ldots,p_k,x$ in the layer  $G^{y}$,
$W_3$, which is a walk $y,q_{a+1},\ldots,q_b$ in the layer $^{x}\!H$ and a walk $(x,q_b),(x_2,y_2)$. Note that no vertex of $W_1$ is adjacent to a vertex of $^{x_2}\!H$ and because $Q$ was the shortest walk between $y_1$ and $y_2$ that contains $y$, only one vertex of $W_1$ is adjacent to $(x_1,y_1)$. Note that in case $yy_1 \in E(H)$, $W_1$ must be shortened using diagonal edges, similarly as on many points in the previous proofs. As $yy_2 \notin E(H)$, no vertex of $W_2$ is adjacent to a vertex of  $G^{y_2}$. 
If there is $j_y$ such that $y_2q_{j_y} \in E(H)$, the only possibility is that $j_y \in \lbrace 0,\ldots,a\rbrace$. Therefore also $W_3$ meets the conditions to get a tolled walk between $(x_1,y_1)$ and $(x_2,y_2)$ that goes through $(x,y)$.


To complete the proof, assume that $xx_1 \in E(G)$. As $(x,y) \notin N\left((x_1,y_1)\right)$, $y_1y \notin E(H)$. 
Note also that there is $i_x \in \lbrace k+1,\ldots,l\rbrace$, such that $x_1p_{i_x} \in E(G)$. 
Since $x \notin T_G(x_1,x_2)$, $xx_2 \notin E(G)$. Let $W_1$ be a walk $(x_1,y_1),(x,q_0),(x,q_1),\ldots,(x,y)$. No vertex of $W_1$ (besides $(x,q_0)$) is adjacent to $(x_1,y_1)$ because $y_1q_i \in E(H)$ only if $i \in \lbrace a+1,\ldots,b\rbrace$, and no vertex of $W_1$ is adjacent to $(x_2,y_2)$ because $xx_2 \notin E(G)$. Let $W_2$ be a walk $x,p_{k+1},\ldots,p_l,x_2$ in the layer $G^{y}$ and $W_3$ a walk $y,q_{a+1},\ldots,q_{b},y_2$ in the layer  $^{x_2}\!H$. If $yy_2 \in E(H)$ define $W_2'$ to be a walk $(x,y), (p_{k+1},y),\ldots,(p_l,y),(x_2,y_2)$. If $yy_2 \notin E(H)$, a concatenation of $W_1,W_2$ and $W_3$ is a desired tolled walk, otherwise a concatenation of $W_1$ and $W_2'$ is a walk which completes the proof. \qed
\end{proof}

From Lemmas \ref{l:main1}, $\ldots ,$ \ref{l:main4} it follows that the toll interval between two vertices $(x_1,y_1),$ $(x_2,y_2) \in V(G \boxtimes H)$, for which $x_1x_2 \notin E(G)$ and $y_1y_2 \notin E(H)$,  covers all vertices of the product except some of their neighbors:

\begin{corollary}\label{p:1}
Let $(x_1,y_1),(x_2,y_2) \in V(G \boxtimes H)$, where $x_1x_2\notin{E(G)}$, $y_1y_2 \notin E(H)$. If $(x,y)\in V(G \boxtimes H)\setminus \left( N((x_1,y_1)) \cup N((x_2,y_2 ))\right)$, then $(x,y) \in T_{G \boxtimes H}\left((x_1,y_1),(x_2,y_2)\right)$.
\end{corollary}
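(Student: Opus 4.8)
The plan is to prove the statement by a four-way case distinction according to whether the first coordinate $x$ lies in $T_G(x_1,x_2)$ and whether the second coordinate $y$ lies in $T_H(y_1,y_2)$. These two binary alternatives partition every vertex $(x,y)$ into four mutually exclusive and jointly exhaustive cases, so it suffices to verify the conclusion $(x,y) \in T_{G \boxtimes H}\left((x_1,y_1),(x_2,y_2)\right)$ separately in each of them, reducing the corollary to an assembly of the preceding lemmas.

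First I would dispose of the case $x \in T_G(x_1,x_2)$ and $y \in T_H(y_1,y_2)$: here Lemma~\ref{l:main1} applies directly and in fact delivers the membership without even invoking the hypothesis that $(x,y)$ avoids the two neighborhoods. For the remaining three cases the assumption $(x,y)\notin N((x_1,y_1))\cup N((x_2,y_2))$ is exactly the extra hypothesis demanded by the matching lemma: the case $x\notin T_G(x_1,x_2)$, $y\in T_H(y_1,y_2)$ is Lemma~\ref{l:main2b}; the case $x\in T_G(x_1,x_2)$, $y\notin T_H(y_1,y_2)$ is Lemma~\ref{l:main3b}; and the case $x\notin T_G(x_1,x_2)$, $y\notin T_H(y_1,y_2)$ is Lemma~\ref{l:main4}. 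Collecting these four conclusions completes the argument.

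Because the genuinely constructive work---producing an explicit tolled $(x_1,y_1),(x_2,y_2)$-walk through $(x,y)$ by concatenating subwalks across the $G$- and $H$-layers---has already been carried out inside the lemmas, the only thing the corollary itself must certify is that the case split is exhaustive and that in each branch the neighbor hypothesis coincides with what the invoked lemma requires. The main (and only minor) obstacle is therefore bookkeeping: confirming that no membership pattern is omitted, and noting that the sharper Lemmas~\ref{l:main2} and~\ref{l:main3}, which trade the neighbor restriction for the requirement $y\notin\{y_1,y_2\}$ (respectively $x\notin\{x_1,x_2\}$), are not needed here---for the vertices under consideration they are subsumed by the weaker Lemmas~\ref{l:main2b} and~\ref{l:main3b}, whose hypotheses are precisely met.
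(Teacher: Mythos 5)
Your proposal is correct and matches the paper's own argument, which derives Corollary~\ref{p:1} exactly as you do: an exhaustive four-way case split on whether $x \in T_G(x_1,x_2)$ and $y \in T_H(y_1,y_2)$, settled respectively by Lemma~\ref{l:main1} (no neighbor hypothesis needed), Lemma~\ref{l:main2b}, Lemma~\ref{l:main3b}, and Lemma~\ref{l:main4}. Your observation that the sharper Lemmas~\ref{l:main2} and~\ref{l:main3} are subsumed here is also accurate.
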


%
%
%

 A toll number was already studied in the Cartesian and the lexicographic product graphs~\cite{gr}. In both cases, the exact results on the number of extreme vertices are not known (except the bounds that holds in all graph classes). In the rest of this section we will consider extreme vertices of the strong product graphs.
Using the results, listed above, we will prove that there are no extreme vertices with respect to toll convexity in the strong product of two non complete graphs.

\begin{lemma}
Let $x \in V(G)$ be a vertex, that is not an extreme vertex in $G$. Then, for any $y \in V(H)$, $(x,y)$ is not an extreme vertex of the product $G \bt H$.
\end{lemma}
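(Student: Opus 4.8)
The plan is to translate the statement into the language of toll intervals and then push a tolled walk from $G$ up into a single $G$-layer of the product. Recall that a vertex $s$ fails to be an extreme vertex of a graph exactly when $V\setminus\{s\}$ is not t-convex, i.e.\ when $s$ lies on a tolled walk joining two vertices that are both different from $s$. So I would first unpack the hypothesis: since $x$ is not an extreme vertex of $G$, the set $V(G)\setminus\{x\}$ is not t-convex, hence there exist $x_1,x_2\in V(G)\setminus\{x\}$ with $x\in T_G(x_1,x_2)$. Because $x\neq x_1$ and $x\neq x_2$, the interval $T_G(x_1,x_2)$ contains a vertex other than its endpoints; as the toll interval of an adjacent pair is just that pair, this forces $x_1x_2\notin E(G)$ and yields a genuine tolled walk $P\colon x_1,w_1,\ldots,w_k,x_2$ in $G$ (with $k\ge 1$) on which $x=w_j$ for some $j$.

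The heart of the argument is a lifting step. Fix the arbitrary $y\in V(H)$ and transport $P$ into the layer $G^{y}$, obtaining the sequence $P'\colon (x_1,y),(w_1,y),\ldots,(w_k,y),(x_2,y)$. The key observation is that any two vertices of $G^{y}$ share the second coordinate $y$, so by the adjacency rule of the strong product (and since $yy\notin E(H)$) they are adjacent in $G\bt H$ \emph{if and only if} their first coordinates are adjacent in $G$; in other words, $G^{y}$ carries no edges beyond those of the isomorphic copy of $G$. Consequently $P'$ is a tolled walk in $G\bt H$ precisely because $P$ is a tolled walk in $G$: consecutive vertices of $P'$ are adjacent, $(x_1,y)$ is adjacent to $(w_i,y)$ iff $x_1w_i\in E(G)$ iff $i=1$, symmetrically for $(x_2,y)$, and $(x_1,y)(x_2,y)\notin E(G\bt H)$ because $x_1x_2\notin E(G)$. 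Hence $(x,y)=(w_j,y)\in T_{G\bt H}\big((x_1,y),(x_2,y)\big)$.

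To finish, I would note that $x_1\neq x$ and $x_2\neq x$ give $(x_1,y)\neq(x,y)$ and $(x_2,y)\neq(x,y)$, so $(x,y)$ lies on a tolled walk between two vertices distinct from it; therefore $V(G\bt H)\setminus\{(x,y)\}$ is not t-convex and $(x,y)$ is not an extreme vertex of $G\bt H$. I do not expect a real obstacle here: the only point demanding care is the claim that the tolled-walk conditions survive the passage from $P$ to $P'$, and this is exactly where the remark that vertices of a fixed layer are adjacent only through their $G$-coordinates does all the work. It is worth emphasizing that this argument is self-contained and does not require the interval Lemmas~\ref{l:main1}--\ref{l:main4}, since everything takes place inside the single layer $G^{y}$.
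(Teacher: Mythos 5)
Your proof is correct and follows the same route as the paper's: both take $x_1,x_2\in V(G)\setminus\{x\}$ with $x\in T_G(x_1,x_2)$ and lift a tolled $x_1,x_2$-walk into the layer $G^{y}$, which is an induced copy of $G$, so the tolled-walk conditions transfer verbatim. Your version merely spells out two details the paper leaves implicit --- that $x_1x_2\notin E(G)$ (so $(x_1,y)$ and $(x_2,y)$ are non-adjacent in $G\bt H$) and that layer adjacency coincides with adjacency in $G$ --- which is careful but not a different argument.
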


\begin{proof}
As $x$ is not an extreme vertex in $G$, there are $x_1,x_2 \in V(G)\setminus \lbrace x \rbrace$ such that $x \in T_G(x_1,x_2)$. Denote with $W$ an $x_1,x_2$-tolled walk that contains $x$. Then $W$ in the layer $G^y$ is a tolled $(x_1,y),(x_2,y)$-walk that contains $(x,y)$. Therefore 
$(x,y)$ is not an extreme vertex of $G \bt H$.\qed
\end{proof}

\begin{corollary}\label{p:ext}
Let $G$ and $H$ be connected graphs, that are not complete graphs. If a vertex $(x,y) \in V(G \boxtimes H)$ is an extreme vertex, then $x$ is extreme in $G$ and $y$ is extreme in $H$.
\end{corollary}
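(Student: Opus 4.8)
The plan is to obtain this corollary directly from the preceding lemma by contraposition, combined with the commutativity of the strong product. The lemma above already supplies the first-coordinate half of the statement: it asserts that if $x$ is not extreme in $G$, then $(x,y)$ fails to be extreme in $G \bt H$ for every choice of $y$. Reading this contrapositively, whenever $(x,y)$ is an extreme vertex of $G \bt H$, its first coordinate $x$ must be extreme in $G$. So the only work left is to argue the symmetric conclusion for the second coordinate.

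For that I would invoke the factor-swapping isomorphism $G \bt H \cong H \bt G$ induced by $(g,h) \mapsto (h,g)$. Since a graph isomorphism carries tolled walks to tolled walks and hence preserves the toll interval function, it preserves toll convexity and, in particular, the property of being an extreme vertex. Thus $(x,y)$ is extreme in $G \bt H$ precisely when $(y,x)$ is extreme in $H \bt G$. Now I would apply the lemma \emph{to} $H \bt G$, taking $H$ in the role of the first factor and $y$ in the role of the distinguished vertex: if $y$ were not extreme in $H$, the lemma would force $(y,x)$ to be non-extreme in $H \bt G$, hence $(x,y)$ to be non-extreme in $G \bt H$. Contraposing, an extreme $(x,y)$ compels $y$ to be extreme in $H$. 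Putting the two implications together yields the claim.

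I do not expect any substantive obstacle in this argument; it is essentially bookkeeping around a single lemma. The one point deserving care is the transfer of the lemma across the isomorphism $G \bt H \cong H \bt G$, which reduces to the routine observation that extremality with respect to toll convexity is an isomorphism invariant. I would also remark that the non-completeness hypotheses on $G$ and $H$, although stated here to stay consistent with the setting of the section, are not actually exploited in this proof; the lemma and the contraposition go through regardless. Those hypotheses acquire real force only in the sharper follow-up result, where one shows that $G \bt H$ in fact has \emph{no} extreme vertices when both factors are non-complete.
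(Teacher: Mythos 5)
Your argument is correct and is exactly what the paper intends: the corollary is stated without proof as an immediate consequence of the preceding lemma, obtained by contraposition for the first coordinate and by the commutativity isomorphism $G \bt H \cong H \bt G$ (which preserves tolled walks and hence extremality) for the second. Your side remark is also accurate --- the non-completeness hypotheses are not needed here and only matter for the follow-up theorem.
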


\begin{theorem}
Let $G$ and $H$ be connected graphs, that are not complete graphs. Then a graph $G \boxtimes H$ has no extreme vertices.
\end{theorem}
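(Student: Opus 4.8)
The plan is to show that the candidate set for extreme vertices, provided by Corollary~\ref{p:ext}, is in fact empty when both $G$ and $H$ are non-complete. By Corollary~\ref{p:ext}, any extreme vertex $(x,y)$ of $G \boxtimes H$ must have $x$ extreme in $G$ and $y$ extreme in $H$. So it suffices to take an arbitrary such candidate $(x,y)$ with $x$ extreme in $G$ and $y$ extreme in $H$, and exhibit two vertices $u,v \in V(G \boxtimes H)$ with $(x,y) \in T_{G \boxtimes H}(u,v)$ and $(x,y) \notin \{u,v\}$; this witnesses that $(x,y)$ is not extreme, giving the contradiction that finishes the theorem.

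The natural place to look for the witnesses $u,v$ is the toll-interval description built up in this section. Since $G$ is non-complete and connected, I would first locate a vertex $x_1 \in V(G)$ with $x_1 x \notin E(G)$ and $x_1 \neq x$ (such an $x_1$ exists because $x$ cannot be universal in a non-complete connected graph once we pick it appropriately; if $x$ happens to be universal it is not extreme unless $G$ is complete). Symmetrically, pick $y_1 \in V(H)$ with $y_1 y \notin E(H)$. I would then set $u = (x_1,y_1)$, and I need a second vertex $v = (x_2,y_2)$ with $x_1 x_2 \notin E(G)$, $y_1 y_2 \notin E(H)$, and crucially $(x,y) \notin N(u) \cup N(v)$, so that Corollary~\ref{p:1} applies directly and places $(x,y)$ in the toll interval $T_{G \boxtimes H}(u,v)$.

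The main obstacle is therefore purely the neighbor bookkeeping demanded by Corollary~\ref{p:1}: the corollary only guarantees $(x,y)$ lies in the interval when $(x,y)$ avoids the neighborhoods of \emph{both} endpoints. We have already arranged $x_1 x \notin E(G)$ and $y_1 y \notin E(H)$, so $(x,y) \notin N(u)$ by the definition of adjacency in the strong product. The delicate part is choosing $x_2,y_2$ so that simultaneously $x_1 x_2 \notin E(G)$, $y_1 y_2 \notin E(H)$, and $(x,y) \notin N(v)$, i.e. not both $x x_2 \in E(G)[\cup\{=\}]$ and $y y_2 \in E(H)[\cup\{=\}]$ hold. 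A clean way is to take $x_2 = x_1$ and $y_2 = y_1$ is not allowed (then $u=v$), so instead I would reuse the non-adjacency already secured: choosing $x_2$ with $x_2 x \notin E(G)$ and $x_2 \neq x$, and $y_2$ with $y_2 y \notin E(H)$ and $y_2 \neq y$, while keeping $x_1 x_2 \notin E(G)$ and $y_1 y_2 \notin E(H)$. If such independent choices cannot be made directly, I would instead fall back on Lemma~\ref{l:main4}, whose hypotheses ($x \notin T_G(x_1,x_2)$, $y \notin T_H(y_1,y_2)$, $(x,y) \notin N(u)\cup N(v)$) are exactly tailored to an extreme $(x,y)$, since an extreme coordinate forces $x \notin T_G(x_1,x_2)$ whenever $x \notin \{x_1,x_2\}$.

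To make the argument uniform and avoid a case split on how many non-neighbors are available, I would actually argue as follows: fix $x_1 \neq x$ with $x_1 x \notin E(G)$ and $y_1 \neq y$ with $y_1 y \notin E(H)$, and observe that because $x$ is extreme in $G$ it lies on no tolled walk between two other vertices, so for \emph{every} admissible pair we automatically have $x \notin T_G(x_1,x_2)$ and $y \notin T_H(y_1,y_2)$. Then a single application of Lemma~\ref{l:main4} (or Corollary~\ref{p:1}) with a second endpoint $(x_2,y_2)$ chosen disjoint from the neighborhood of $(x,y)$ yields $(x,y) \in T_{G \boxtimes H}\bigl((x_1,y_1),(x_2,y_2)\bigr)$ with $(x,y)$ distinct from both endpoints. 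This contradicts extremality of $(x,y)$, and since $(x,y)$ was an arbitrary candidate, $G \boxtimes H$ has no extreme vertices.
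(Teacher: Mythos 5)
There is a genuine gap at the decisive step of your argument: you never actually produce the second endpoint. Your stated constraints are $x_2 \neq x$ with $xx_2 \notin E(G)$ and $x_1x_2 \notin E(G)$ (and symmetrically $y_2 \neq y$, $yy_2 \notin E(H)$, $y_1y_2 \notin E(H)$), and these can be unsatisfiable. Take $G = H = P_3$ and let $(x,y)$ be a corner, say $x$ an endpoint of $G = a\textrm{-}b\textrm{-}c$ with $x = a$: the unique non-neighbor of $x$ is $x_1 = c$, and the only vertex non-adjacent to $c$ is $a = x$ itself, so no admissible $x_2 \neq x$ exists. Your fallback to Lemma~\ref{l:main4} rescues nothing, because that lemma carries exactly the same hypotheses as Corollary~\ref{p:1} (non-adjacent coordinate pairs and $(x,y) \notin N(u) \cup N(v)$); and your closing paragraph simply asserts that a second endpoint ``chosen disjoint from the neighborhood of $(x,y)$'' exists, which is precisely the point requiring proof. (Your observation that extremality of $x$ forces $x \notin T_G(x_1,x_2)$ is correct but idle here, since Corollary~\ref{p:1} needs no such hypothesis.)

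The missing idea --- and the paper's fix --- is to let each endpoint \emph{share a coordinate with $(x,y)$ itself}. Since $(x,y)$ extreme forces $x$ extreme, hence simplicial, in $G$, and a simplicial universal vertex would make $G$ complete, $x$ is not universal; so an eccentric vertex $x_1$ of $x$ satisfies $d_G(x,x_1) \geq 2$, and similarly $d_H(y,y_1) \geq 2$ for $y_1$ eccentric with respect to $y$. Now take $u = (x_1,y)$ and $v = (x,y_1)$: the first coordinates $x_1,x$ are non-adjacent, the second coordinates $y,y_1$ are non-adjacent, and $(x,y)$ is adjacent to neither $u$ nor $v$, so Corollary~\ref{p:1} applies verbatim and gives $(x,y) \in T_{G \bt H}(u,v)$; since $x_1 \neq x$ and $y_1 \neq y$, both endpoints differ from $(x,y)$, contradicting extremality. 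Your insistence on $x_2 \neq x$ \emph{and} $y_2 \neq y$ is the unnecessary constraint that blocked you: a vertex of the product differs from $(x,y)$ as soon as it differs in one coordinate. Also note that your parenthetical justification for the existence of a non-neighbor of $x$ (``if $x$ happens to be universal it is not extreme unless $G$ is complete'') is true but needs the simpliciality argument above to be complete.
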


\begin{proof}
Assume $(x,y) \in V(G \bt H)$ is an extreme vertex of $G \bt H$. By Corollary \ref{p:ext}, $x$ is extreme in $G$ and $y$ is extreme in $H$. Since any extreme vertex is simplicial, $x$ is simplicial in $G$ and $y$ is simplicial in $H$, but none of them is a universal vertex as $G$ and $H$ are not complete graphs. Let $x_1 \in ecc_G(x)$ and $y_1 \in ecc_H(y)$. Note that $d_G(x,x_1)\geq 2$ and $d_H(y,y_1)\geq 2$. Therefore, by Corollary \ref{p:1}, $(x,y) \in T_{G \bt H}((x_1,y),(x,y_1))$, a contradiction.\qed
\end{proof}

\section{Toll number of $G \bt H$}\label{s:tn}

In this section, the toll number of the strong product of two graphs $G$ and $H$ will be considered. Assume first that at least one of  $G$ and $H$  is a complete graph. Observe that  $G\boxtimes K_n \cong G \circ K_n$. As the exact formula for the toll number of $G \circ K_n$ was obtained in~\cite{gr} in terms of toll-dominating pairs, the same result holds also for $tn(G \bt K_n)$.  Therefore we may assume from now on that $G$ and $H$ are not complete graphs. We will prove that the toll number of the strong product of two non complete graphs $G$ and $H$ is at most 3. Then we will also characterize graphs $G \bt H$ with toll number 2 and 3.

It is easy to find graphs $G,H$ such that $tn(G \bt H)=2$. Consider the strong product of two paths $P_n$ and $P_m$, where $n,m \geq 3$. Denote $V(P_n) = \{1,2,\ldots , n\}$, $E(P_n)=\{12,23,\ldots (n-1)n\}$, $V(P_m) = \{1,2,\ldots , m\}$, $E(P_m)=\{12,23,\ldots (m-1)m\}$. Then $\{(1,1), (n,m)\}$ is a toll set of $P_n \bt P_m$. On the other hand, $\{(1,1), (n,m)\}$ and $\{(1,m), (n,1)\}$ are not the only toll sets of $P_n \bt P_m$. For example $\{(1,1), (n,1)\}$ is also a toll set of $P_n \bt P_m$. Moreover, it is also possible to construct a toll set $\{(i_1,j_1),(i_2,j_2)\}$ of $G \bt H$ with $d(i_1,i_2) < diam(G)$ and  $d(j_1,j_2) < diam (H)$. In this sense, any set $\{(i,j),(i+k,j+l)~|~i,j \geq 3; k,l \geq 2; i+k \leq n-2; j+l \leq m-2\}$ is a toll set of $P_n \bt P_m$.

There are also graphs $G,H$, with $tn(G \bt H) > 2$. Let $G$ be a $K_3$ together with a pendant vertex, i.e.\ $V(G)=\{g_1,g_2,g_3,g_4\}, E(G)=\{g_1g_2,g_1g_3,g_2g_3,g_3g_4\}$ and let $H$ be isomorphic to $G$ (see Figure~\ref{e:tn3} for graphs $G,H$ and $G \bt H$). Let $A$ be a smallest toll set of $G \bt H$. Note that $N[g_1]=N[g_2] \subset N[g_3]$. Suppose that $(g_i,h_j) \in A$ for some $i \in \{1,2,3\}, j \in \{1,2,3,4\}$. 
Then for any $l\in \{1,2\}-\{i\}$, $(g_l,h_j) \notin T((g_i,h_j),(g,h))$ for any $(g,h) \in V(G \bt H)$. Hence $|A| \geq 3$. By the symmetry of the strong product, if $(g_i,h_j) \in A$ for some $i \in \{1,2,3,4\},  j \in \{1,2,3\}$, then $|A| \geq 3$. Let $B=\{(g_i,h_j)~|~i \in \{1,2,3\}, j \in \{1,2,3,4\}\} \cup \{(g_i,h_j)~|~i \in \{1,2,3,4\}, j \in \{1,2,3\}\}.$ Since $|A \cap B| \geq 1$, the above explanation implies that $|A| \geq 3$. Observe that $\{(g_1,h_1),(g_4,h_1),(g_4,h_4)\}$ is a toll set of $G \bt H$. Therefore $tn(G \bt H)=3$.

\begin{figure}[h]
\centering
\begin{tikzpicture}[scale=1]
	
	\foreach \y in {-1,0,1,2,3}
	{
		\draw (0,\y)--(3,\y);
		\draw (0,\y) to [bend right=20] (2,\y);
	}
	\foreach \x in {-1,0,1,2,3}
	{
		\draw (\x,0)--(\x,3);
		\draw (\x,0) to [bend right=20] (\x,2);
	}
	
	\draw (0,0)--(3,3);
	\draw (0,3)--(3,0);
	\draw (1,0)--(3,2);
	\draw (2,0)--(3,1);
	\draw (2,0)--(0,2);
	\draw (1,0)--(0,1);
	\draw (0,1)--(2,3);
	\draw (3,1)--(1,3);
	\draw (3,2)--(2,3);
	\draw (0,2)--(1,3);
	
	\foreach \x in {0,1,2}
	{
		\draw (\x,0)--(\x+1,2);
		\draw (\x+1,0)--(\x,2);
	}
	\foreach \y in {0,1,2}
	{
		\draw (0,\y)--(2,\y+1);
		\draw (0,\y+1)--(2,\y);
	}
	\draw (0,0) to [bend right=20] (2,2);
	\draw (0,2) to [bend right=20] (2,0);

\foreach \x  in {0,1,2,3}
\foreach \y  in {0,1,2,3}
{
		\filldraw [fill=black, draw=black,thick] (\x,\y) circle (3pt);
}

\foreach \x  in {0,1,2,3}
{
		\filldraw [fill=black, draw=black,thick] (\x,-1) circle (3pt);
}
\foreach \y  in {0,1,2,3}
{
		\filldraw [fill=black, draw=black,thick] (-1,\y) circle (3pt);
}

\node[] at (0,-1.35) {$g_1$};
\node[] at (1,-1.35) {$g_2$};
\node[] at (2,-1.35) {$g_3$};
\node[] at (3,-1.35) {$g_4$};
\node[] at (1.5,-1.8) {$G$};
\node[] at (-1.35,0) {$h_1$};
\node[] at (-1.35,1) {$h_2$};
\node[] at (-1.35,2) {$h_3$};
\node[] at (-1.35,3) {$h_4$};
\node[] at (-1.8,1.5) {$H$};

\end{tikzpicture}
\caption{A graph $G \bt H$ with $tn(G \bt H)=3$.\label{e:tn3}}
\end{figure}

For a path $P=x_1,x_2,\ldots, x_n$ let $P^{-1}$ denote an $x_n,x_1$-path $x_n,x_{n-1},\ldots , x_1$.

The  following theorem says, that the toll number of $G \bt H$ can not exceed 3 in the case of two not complete graphs $G$ and $H$.

\begin{theorem}\label{atMost3}
Let $G$ and $H$ be connected non complete graphs. Then $tn(G \boxtimes H) \leq 3$.
\end{theorem}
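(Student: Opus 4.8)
The plan is to exhibit, for arbitrary connected non-complete graphs $G$ and $H$, a set $S$ of at most three vertices whose toll closure is all of $V(G\bt H)$, using Corollary~\ref{p:1} as the main engine. Since neither graph is complete, I first fix non-adjacent vertices $x_1,x_2\in V(G)$ and $y_1,y_2\in V(H)$; to keep the leftover neighbourhoods as small and well-separated as possible I would take $x_1,x_2$ diametral in $G$ and $y_1,y_2$ diametral in $H$, so $d_G(x_1,x_2)=diam(G)\ge 2$ and $d_H(y_1,y_2)=diam(H)\ge 2$. Applying Corollary~\ref{p:1} to the diagonal pair $(x_1,y_1),(x_2,y_2)$ gives
\[
V(G\bt H)\setminus\bigl(N((x_1,y_1))\cup N((x_2,y_2))\bigr)\subseteq T_{G\bt H}\bigl((x_1,y_1),(x_2,y_2)\bigr),
\]
so the entire problem reduces to covering the two corner neighbourhoods $N((x_1,y_1))$ and $N((x_2,y_2))$ with one or two further toll intervals.

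Next I would introduce a third vertex $(x_3,y_3)$ and record a general principle: whenever a chosen pair again satisfies the hypotheses of Corollary~\ref{p:1}, the only vertices its interval can omit are neighbours of its two endpoints. Writing $A=N((x_1,y_1))$, $B=N((x_2,y_2))$, $C=N((x_3,y_3))$ and assuming all three diagonal pairs are admissible, a short set computation gives
\[
\bigl(A\cup B\bigr)\cap\bigl(A\cup C\bigr)\cap\bigl(B\cup C\bigr)=(A\cap B)\cup(A\cap C)\cup(B\cap C),
\]
so the union of the three toll intervals leaves uncovered only vertices that are common neighbours of two of the three chosen vertices. A vertex is a common neighbour of $(x_i,y_i)$ and $(x_j,y_j)$ exactly when its $G$-coordinate lies in $N_G[x_i]\cap N_G[x_j]$ and its $H$-coordinate in $N_H[y_i]\cap N_H[y_j]$; hence each such set is empty as soon as $d_G(x_i,x_j)\ge 3$ or $d_H(y_i,y_j)\ge 3$. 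Thus whenever $diam(G)\ge 3$ or $diam(H)\ge 3$ I can choose the three vertices so that every pair is separated in the coordinate of large diameter, and the three intervals already cover $V(G\bt H)$.

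The genuinely hard case is $diam(G)=diam(H)=2$, where common neighbours are unavoidable and, moreover, $G$ or $H$ may lack three pairwise non-adjacent vertices, so not all three diagonal pairs can be made admissible. Here I would follow the configuration of the $tn=3$ example and take the $L$-shaped triple $S=\{(x_1,y_1),(x_2,y_1),(x_2,y_2)\}$: the diagonal pair $(x_1,y_1),(x_2,y_2)$ is handled by Corollary~\ref{p:1} as above, so it remains to cover the parts of $N((x_1,y_1))$ and $N((x_2,y_2))$ that it misses. Since $(x_1,y_1)$ is \emph{not} an endpoint of the straight pair $(x_2,y_1),(x_2,y_2)$, I would cover $N((x_1,y_1))$ by constructing tolled walks between these two vertices directly, in the spirit of Lemmas~\ref{l:main1}--\ref{l:main4}: route a walk through a $G$-layer and bend it into adjacent $H$-layers by diagonal edges near its ends, so as to thread it through each missed neighbour. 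Symmetrically, the straight pair $(x_1,y_1),(x_2,y_1)$ captures $N((x_2,y_2))$.

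The main obstacle is exactly this by-hand analysis of the two straight pairs: verifying that each endpoint is adjacent only to the second (respectively second-to-last) vertex of the constructed walk while the walk still passes through every omitted neighbour, and checking that the eccentric choice of $x_1,x_2,y_1,y_2$ leaves no twin-type vertex — the feature that forces $tn=3$ in the pendant-triangle example — uncovered. I expect this verification to split into a few subcases according to whether the troublesome neighbour shares a coordinate with one of the endpoints, each subcase dispatched by the same layer-concatenation technique already developed in Section~\ref{interval}.
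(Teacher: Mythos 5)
Your overall skeleton is the paper's: diametral corners $x_1,x_2\in V(G)$, $y_1,y_2\in V(H)$, Corollary~\ref{p:1} applied to the diagonal pair to reduce everything to the two corner neighbourhoods, and the L-shaped triple $\{(x_1,y_1),(x_2,y_1),(x_2,y_2)\}$ with the straight pairs mopping up. But there is a genuine gap, in two places. First, your $diam\geq 3$ branch is unsound: covering all three pairwise ``leftover'' sets requires three vertices that are pairwise admissible for Corollary~\ref{p:1} (so $x_1,x_2,x_3$ pairwise non-adjacent in $G$ \emph{and} $y_1,y_2,y_3$ pairwise non-adjacent in $H$) and pairwise at distance $\geq 3$ in the large-diameter factor, and neither need exist. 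Take $G=H=P_4$: the diameter is $3$, yet $P_4$ contains no three vertices pairwise at distance $\geq 3$ (indeed no independent triple at all), so $P_4\bt P_4$ falls through your first branch, and since $diam(G)=diam(H)=3\neq 2$ it is not caught by your ``hard case'' either. Your case split by diameter is therefore not exhaustive as argued; the L-shaped analysis must be run for \emph{all} diameters, which is exactly what the paper does --- it never splits on the diameter at all.

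Second, the step you defer (``the main obstacle'') is precisely the crux, and your sketch of it (``bend into adjacent $H$-layers by diagonal edges near its ends'') does not identify the idea that makes it work. In the paper, a missed neighbour $(x,y)$ of $(x_1,y_1)$ is first attacked with Lemmas~\ref{l:main1}--\ref{l:main3} (adjacency to $(x_1,y_1)$ forces $xx_1\in E(G)$ or $yy_1\in E(H)$, and these lemmas cover all cases except the boundary ones such as $y=y_1$ with $x\notin T_G(x_1,x_2)$, or $x=x_1$ with $y\notin T_H(y_1,y_2)$, or both coordinates outside their toll intervals). The residual cases are then handled by one explicit tolled walk \emph{between the straight pair} $(x_2,y_1),(x_2,y_2)$: take $P^{-1}$ in $G^{y_1}$ from $(x_2,y_1)$ to $(x_1,y_1)$, insert the two-step detour $(x_1,y_1),(x,y),(x_1,y_1)$, then follow $Q$ in $^{x_1}\!H$ and $P$ in $G^{y_2}$ back to $(x_2,y_2)$, where $P,Q$ are shortest $x_1,x_2$- and $y_1,y_2$-paths. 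The key point your proposal lacks is that tolled walks may repeat vertices, so the missed neighbour is picked up by a detour that immediately returns to $(x_1,y_1)$; tolledness then needs only $d_G(x_1,x_2)\geq 2$, $d_H(y_1,y_2)\geq 2$, and the observation that $x\notin T_G(x_1,x_2)$ forces $xx_2\notin E(G)$ (and symmetrically $yy_2\notin E(H)$). Without this construction, your corner-covering claim remains unverified, and with it, your whole three-vertex case analysis collapses into a single uniform argument.
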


\begin{proof}
Let $x_1, x_2$ be diametral vertices of $G$ and $y_1,y_2$ diametral vertices of $H$. Since $G$ and $H$ are not isomorphic to complete graphs, $d_G(x_1,x_2) \geq 2$ and $d_H(y_1,y_2) \geq 2$. We will show that a set $\lbrace (x_1,y_1),(x_2,y_1),(x_2,y_2) \rbrace$ is a toll set of $G \bt H$.

Let $(x,y)$ be an arbitrary vertex of $G \bt H$. If $(x,y)(x_1,y_1),(x,y)(x_2,y_2) \notin E(G \bt H)$, then Corollary~\ref{p:1} implies that $(x,y) \in T_{G\bt H}((x_1,y_1),(x_2,y_2))$. Therefore we may assume that $(x,y)$ is either adjacent to $(x_1,y_1)$ or $(x_2,y_2)$ in $G \bt H$. Without loss of generality, let $(x,y)(x_1,y_1) \in E(G\bt H)$. If $x \in T_G(x_1,x_2)$ and $y \in T_G(y_1,y_2)$, then it follows from Lemma~\ref{l:main1} that $(x,y) \in T_{G \bt H}((x_1,y_1),(x_2,y_2))$.

For the rest of the proof, denote by $P$ a shortest $x_1,x_2$-path in $G$ and by $Q$ a shortest $y_1,y_2$-path in $H$.

Suppose now that $x \notin T_G(x_1,x_2)$, which implies that $x \notin \{x_1,x_2\}$, and $y \in T_H(y_1,y_2)$. 
Since $(x,y)$ is adjacent to $(x_1,y_1)$ and $x \neq x_1$, $xx_1 \in E(G)$. As $x \notin T_G(x_1,x_2)$, $xx_2 \notin E(G)$. If $y \neq y_1$, then Lemma~\ref{l:main2} implies that $(x,y) \in T_{G\bt H}((x_1,y_1),(x_2,y_2))$. Hence let $y=y_1.$  
Then the concatenation $W$  of paths $P^{-1}$ in $G^{y_1}$, a path $(x_1,y_1),(x,y),(x_1,y_1)$, a path $Q$ in $^{x_1}H$ and a path $P$ in $G^{y_2}$ is a tolled $(x_2,y_1),(x_2,y_2)$-walk that contains $(x,y)$. 

If $x \notin T_G(x_1,x_2),y \notin T_H(y_1,y_2)$, then $x \neq x_1$ and $y\neq y_1$. Since $(x,y)$ is adjacent to $(x_1,y_1)$, $x_1x \in E(G)$ and $y_1y \in E(H)$. As $x \notin T_G(x_1,x_2)$ and  $y\notin T_H(y_1,y_2)$, $xx_2 \notin E(G)$ and $yy_2 \notin E(H)$. Then also the concatenation $W$ 
 is a tolled $(x_2,y_1),(x_2,y_2)$-walk that contains $(x,y)$. 

Finally, let $x \in T_G(x_1,x_2)$ and $y\notin T_H(y_1,y_2)$, which implies that $y \notin \{y_1,y_2\}$. Since $(x,y)$ is adjacent to $(x_1,y_1)$ and $y \neq y_1$, $yy_1 \in E(G)$. As $y \notin T_G(y_1,y_2)$, $yy_2 \notin E(G)$. If $x \neq x_1$, then Lemma~\ref{l:main3} implies that $(x,y) \in T_{G\bt H}((x_1,y_1),(x_2,y_2))$. Hence let $x=x_1.$ Then, again, the concatenation $W$
form a tolled $(x_2,y_1),(x_2,y_2)$-walk that contains $(x,y)$.  \qed
\end{proof}

The rest of this section will be focused in finding  a characterization of graphs $G \bt H$ with toll number 2. First, there is a necessary condition for graphs $G \bt H$ having toll number 2. 

\begin{theorem}\label{th:nec}
Let $G$ and $H$ be connected non complete graphs. If $tn(G \bt H)=2$, then there exist two different, non adjacent vertices $(x_1,y_1),(x_2,y_2) \in V(G \bt H)$ such that
\begin{enumerate}
\item for any $(x,y) \in N((x_1,y_1))$, $N\left[(x,y)\right] \nsubseteq N\left[(x_1,y_1)\right]$ and
\item for any $(x,y) \in N((x_2,y_2))$, $N\left[(x,y)\right] \nsubseteq N\left[(x_2,y_2)\right]$.
\end{enumerate}
\end{theorem}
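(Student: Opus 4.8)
The plan is to take the toll set of size two as the required pair and then verify the two neighbourhood conditions directly from the definition of a tolled walk. Since $tn(G \boxtimes H) = 2$, there is a toll set $\{(x_1,y_1),(x_2,y_2)\}$, and because $T_{G\boxtimes H}[\{(x_1,y_1),(x_2,y_2)\}] = T_{G\boxtimes H}((x_1,y_1),(x_2,y_2))$ (the two singleton intervals $T(u,u)=\{u\}$ and $T(v,v)=\{v\}$ add nothing), we have $T_{G\boxtimes H}((x_1,y_1),(x_2,y_2)) = V(G \boxtimes H)$. First I would record that $(x_1,y_1)$ and $(x_2,y_2)$ are non-adjacent: if they were adjacent, the only tolled walk between them would be the single edge, so their toll interval would be $\{(x_1,y_1),(x_2,y_2)\}$, which cannot equal $V(G \boxtimes H)$ since the strong product of two non-trivial graphs has at least four vertices. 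This supplies the non-adjacency demanded by the statement.

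Writing $u = (x_1,y_1)$ and $v = (x_2,y_2)$, I would then prove condition $1$ by contradiction, with condition $2$ following by the symmetric argument that exchanges the roles of $u$ and $v$. Suppose some $w \in N(u)$ satisfies $N[w] \subseteq N[u]$. The key observation is that, since $u$ and $v$ are distinct and non-adjacent, $v \notin N[u]$ and hence $v \notin N[w]$; in particular $w \neq v$ and $wv \notin E(G \boxtimes H)$. I would then argue that $w$ cannot lie on any tolled walk from $u$ to $v$. As $w$ is adjacent to $u$, the defining condition ``$uw_i \in E$ iff $i = 1$'' forces $w$ to occur only as the second vertex $w_1$ of such a walk. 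If the walk has length at least three, its third vertex $w_2$ is a neighbour of $w$, hence lies in $N[w] \subseteq N[u]$; but then $w_2$ is adjacent to $u$ at a forbidden position (or $w_2 = u$, which propagates the obstruction to the following vertex). The only remaining possibility is the length-two walk $u,w,v$, which is excluded because it would require $wv \in E$. Thus $w \notin T_{G\boxtimes H}(u,v) = V(G\boxtimes H)$, a contradiction, so no such $w$ exists.

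The main obstacle will be the careful positional bookkeeping in the middle step, namely ruling out that a tolled walk passes $w$ through as $w_1$ and then doubles back toward $u$. Concretely one must show that every neighbour $w_2$ of $w$ leads to a contradiction: either $w_2 \in N(u)$ with $w_2 \neq u$, an immediate violation of the adjacency condition at position two, or $w_2 = u$, in which case following the walk one step further reintroduces a vertex adjacent to $u$ at a forbidden position, while the boundary case in which the walk terminates there forces $v$ to be adjacent to $u$, contradicting non-adjacency. Handling these few cases cleanly, together with the remark that $w$ cannot be rescued through the trivial intervals $T(u,u)$ or $T(v,v)$ since these are merely $\{u\}$ and $\{v\}$, is the only delicate point; everything else is a direct reading of the tolled-walk axioms and uses no product-specific structure beyond $|V(G \boxtimes H)| \geq 4$.
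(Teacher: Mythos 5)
Your proposal is correct and follows essentially the same route as the paper: both hinge on the single key fact that a vertex $w\in N\left((x_1,y_1)\right)$ with $N[w]\subseteq N\left[(x_1,y_1)\right]$ cannot lie on any tolled walk between the two non-adjacent vertices of a purported toll set of size two, so such a set cannot have toll closure $V(G\bt H)$. The only difference is presentational --- the paper argues contrapositively over all non-adjacent pairs and asserts this key fact in one sentence, whereas you work directly with the given toll set and carefully justify the fact (including the $w_2=(x_1,y_1)$ back-tracking case), which is a legitimate filling-in of a step the paper leaves implicit.
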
 
\begin{proof}
For the purpose of contradiction suppose that for any non adjacent vertices $(x_1,y_1),$ $(x_2,y_2) \in V(G \bt H)$, there exists $(x,y) \in N((x_1,y_1))$ with $N\left[ (x,y) \right] \subseteq N\left[  (x_1,y_1) \right]$ or there exists $(x,y) \in N((x_2,y_2))$ with $N\left[ (x,y) \right] \subseteq N\left[  (x_2,y_2) \right]$.
 
Let $(x_1,y_1),(x_2,y_2) \in V(G \bt H)$ be arbitrary non adjacent vertices. Without loss of generality assume that there exists $(x,y) \in N((x_1,y_1))$ with $N\left[ (x,y) \right] \subseteq N\left[  (x_1,y_1) \right]$. Since any neighbor of $(x,y)$ is also neighbor of $(x_1,y_1)$, $(x,y) \notin T_{G\bt H}((x_1,y_1),(x_2,y_2))$. Therefore $\{(x_1,y_1),(x_2,y_2)\}$ is not a toll set for any non adjacent vertices $(x_1,y_1),(x_2,y_2)$. As two adjacent vertices do not generate a toll set, $tn(G \bt H) >2$, a contradiction.   \qed
\end{proof}

Before we prove that the condition from Theorem~\ref{th:nec} is also a sufficient condition for a strong product graph to have toll number 2, we need the following lemmas. Results in Section~\ref{interval} are about toll intervals between two non adjacent vertices of the strong product when the projections of both vertices on both factors are also non adjacent. Next results deal with the remaining cases of non adjacent vertices of the strong product graphs ($y_1=y_2$ or $y_1y_2 \in E(H)$)  and hold if additional condition is added.

\begin{lemma}\label{l:equal}
Let $G$ and $H$ be connected non complete graphs. Suppose that there exist two different, non adjacent vertices $(x_1,y_1),(x_2,y_1) \in V(G \bt H)$, such that
\begin{enumerate}
\item for any $(x,y) \in N((x_1,y_1))$, $N\left[(x,y)\right] \nsubseteq N\left[(x_1,y_1)\right]$ and
\item for any $(x,y) \in N((x_2,y_1))$, $N\left[(x,y)\right] \nsubseteq N\left[(x_2,y_1)\right]$.
\end{enumerate}
Then, for any $(x,y) \notin  N\left((x_1,y_1) \right) \cup N\left((x_2,y_1\right))$, $(x,y) \in T_{G \bt H}((x_1,y_1),(x_2,y_1))$.
\end{lemma}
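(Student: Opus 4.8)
The plan is to prove the statement constructively: given an arbitrary $(x,y)\notin N((x_1,y_1))\cup N((x_2,y_1))$, I would exhibit an explicit tolled $(x_1,y_1),(x_2,y_1)$-walk passing through $(x,y)$. First note that since $(x_1,y_1)$ and $(x_2,y_1)$ share the second coordinate $y_1$ and are non-adjacent, we have $x_1\neq x_2$ and $x_1x_2\notin E(G)$; also $(x_1,y_1),(x_2,y_1)$ themselves lie in the interval trivially, so I may assume $(x,y)$ is neither endpoint. The guiding idea is that any vertex whose second coordinate lies outside $N_H[y_1]$ is automatically non-adjacent to both endpoints (whose second coordinate is $y_1$), and likewise every vertex of the layer $^x\!H$ is non-adjacent to both endpoints whenever $x\notin N_G[x_1]\cup N_G[x_2]$. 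Hence if I route the interior of the walk through such ``safe'' vertices, the only places where the tolled-walk conditions must be checked are the first and the last step. I would split the argument according to whether $y\in N_H[y_1]$.

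In the first case, $y\notin N_H[y_1]$, so $d_H(y_1,y)\ge 2$ and $(x,y)$ is non-adjacent to both endpoints for free. Take a shortest $y_1,y$-path $R\colon y_1,r_1,\dots,r_s=y$ (here $s\ge 2$, so $r_2,\dots,r_s\notin N_H[y_1]$) and shortest $G$-paths $P$ from $x_1$ to $x$ and $P'$ from $x$ to $x_2$. I would concatenate: $R$ lifted to the layer $^{x_1}\!H$ (escaping the neighborhood of $(x_1,y_1)$ after the single step to $(x_1,r_1)$), then $P$ and $P'$ lifted to the layer $G^{y}$ (every such vertex has second coordinate $y\notin N_H[y_1]$, hence is safe, and the walk passes through $(x,y)$), then $R$ reversed and lifted to $^{x_2}\!H$ to descend to $(x_2,y_1)$. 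One checks that the only neighbor of $(x_1,y_1)$ on this walk is $(x_1,r_1)$ and the only neighbor of $(x_2,y_1)$ is $(x_2,r_1)$, so it is tolled. Notably this case uses neither hypothesis (1) nor (2).

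In the second case, $y\in N_H[y_1]$; then non-adjacency of $(x,y)$ to both endpoints forces $x\notin N_G[x_1]\cup N_G[x_2]$, so the whole layer $^x\!H$ is safe. This is where hypotheses (1) and (2) are essential. Applying (1) to the neighbor $(x_1,b_1)$ of $(x_1,y_1)$, for any chosen $b_1\in N_H(y_1)$, yields $N_H[b_1]\nsubseteq N_H[y_1]$ and hence a vertex $y_1'\in N_H(b_1)\setminus N_H[y_1]$; symmetrically (2) yields $b_2\in N_H(y_1)$ and $y_2'\in N_H(b_2)\setminus N_H[y_1]$. I would then build the walk as: the escape $(x_1,y_1),(x_1,b_1),(x_1,y_1')$ into the safe far layer $y_1'$; a $G$-path $P$ lifted to $G^{y_1'}$ reaching $(x,y_1')$; a path inside the safe layer $^x\!H$ running from $(x,y_1')$ through $(x,y)$ to $(x,y_2')$; a $G$-path $P'$ lifted to $G^{y_2'}$ reaching $(x_2,y_2')$; and the symmetric descent $(x_2,y_2'),(x_2,b_2),(x_2,y_1)$. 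Again the first and last steps are the only contacts with the endpoints' neighborhoods, so the walk is tolled and it visits $(x,y)$.

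The main obstacle is precisely these two escape/descent steps at the ends, and this is exactly what the closed-neighborhood hypotheses control: if, say, $y_1$ were a universal vertex of $H$, then $N_H[y_1]=V(H)$, no far layer would exist, and it would be impossible to leave the neighborhood of $(x_1,y_1)$ in the problematic second case; hypothesis (1) forbids this, since taking $a=x_1$ it gives $N_H[b_1]\nsubseteq N_H[y_1]$, i.e. $y_1$ is not universal, and hypothesis (2) does the same at $(x_2,y_1)$. Beyond these steps, verifying that each constructed walk is genuinely tolled is routine and rests entirely on the two safety observations: a vertex with second coordinate outside $N_H[y_1]$, or with first coordinate $x$ when $x\notin N_G[x_1]\cup N_G[x_2]$, is non-adjacent to both $(x_1,y_1)$ and $(x_2,y_1)$. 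I would state these two observations first and then invoke them repeatedly, to keep the adjacency bookkeeping short.
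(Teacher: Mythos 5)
Your proof is correct: both walks you construct are genuinely tolled, and the two ``safety'' observations you isolate (a vertex whose $H$-coordinate lies outside $N_H[y_1]$, or whose $G$-coordinate is some $x\notin N_G[x_1]\cup N_G[x_2]$, is adjacent to neither endpoint) are exactly the adjacency facts the paper's proof also rests on. The difference lies in the case decomposition and in how much of hypotheses (1) and (2) gets used. The paper splits on the $G$-coordinate: when $x\notin N_G[x_1]\cup N_G[x_2]$ it uses a single vertex $y'\notin N_H[y_1]$ --- extracted from the hypotheses only through the weaker consequence that $y_1$ is not universal in $H$ --- and routes the walk up to the level $G^{y'}$, across to the column $^{x}\!H$, with an out-and-back (palindromic) detour inside $^{x}\!H$ to touch $(x,y)$, then back through $G^{y'}$ and down to $(x_2,y_1)$; when $x\in N_G[x_1]\cup N_G[x_2]$, it deduces $y\notin N_H[y_1]$ and uses precisely the walk of your first case (lift a shortest $y_1,y$-path into $^{x_1}\!H$, cross in $G^{y}$, descend in $^{x_2}\!H$). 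You split on the $H$-coordinate instead, so your easy case ($y\notin N_H[y_1]$, needing no hypothesis) is a broader application of the paper's second case, while in your hard case ($y\in N_H[y_1]$, which correctly forces $x\notin N_G[x_1]\cup N_G[x_2]$) you extract more from the hypotheses: applying them to the neighbors $(x_1,b_1)$ and $(x_2,b_2)$ and using $N_{G\bt H}[(a,b)]=N_G[a]\times N_H[b]$ yields distance-two escape vertices $y_1',y_2'\notin N_H[y_1]$, letting your walk traverse $G^{y_1'}$, $^{x}\!H$ and $G^{y_2'}$ monotonically, with no repeated vertices. Both arguments are sound; the paper's consumes only non-universality of $y_1$ (one escape level, at the cost of a backtracking detour), whereas yours buys a cleaner, repetition-free walk by invoking the full strength of the closed-neighborhood conditions at both endpoints.
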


\begin{proof}
First assume that $y_1$ is an universal vertex of $H$. Then for any $y \in V(H) \setminus \{y_1\}$ it holds that $(x_1,y)(x_1,y_1)\in E(G \boxtimes H)$ and $N[(x_1,y)] \subseteq N[(x_1,y_1)]$, a contradiction. Therefore there exists $y' \in V(H)$ such that $y_1y' \notin E(H)$. It is also clear that $x_1\notin N[x_2]$, as $(x_1,y_1),(x_2,y_1)$ are different and non adjacent. 

Let $(x,y) \in V(G \bt H) \setminus (N\left((x_1,y_1) \right) \cup N\left((x_2,y_1\right)))$. 
Let $P$ be a shortest $y_1,y$-path in $H$, $P'$ a shortest $y_1,y'$-path in $H$, $P''$ a shortest $y',y$-path in $H$ and $Q$ a shortest $x_1,x_2$ walk that contains $x$ in $G$.

If $x \notin N[x_1] \cup N[x_2]$, then the concatenation of $P'$ in $^{x_1}H$, $x_1,x$-subwalk of $Q$ in $G^{y'}$, $P''$ in $^xH$, $P''^{-1}$ in $^xH$, $x,x_2$-subwalk of $Q$ in $G^{y'}$ and $P'^{-1}$ in $^{x_2}H$ is a tolled $(x_1,y_1),(x_2,y_1)$ walk that contains $(x,y)$. 

Now let $x \in N[x_1] \cup N[x_2]$. Without loss of generality let $x \in N[x_1]$. Since $(x,y) \notin N((x_1,y_1))$, $y \notin N[y_1]$ (note that the case when $(x,y)=(x_1,y_1)$ is trivial). Then the concatenation of $P$ in $^{x_1}H$, $Q$ in $G^{y}$ and $P^{-1}$ in $^{x_2}H$ is a tolled $(x_1,y_1),(x_2,y_1)$ walk that contains $(x,y)$.  \qed
\end{proof}

\begin{lemma}\label{l:neighbors}
Let $G$ and $H$ be connected non complete graphs. Suppose that there exist two different, non adjacent vertices $(x_1,y_1),(x_2,y_2) \in V(G \bt H)$, where $y_1y_2 \in E(H)$ such that
\begin{enumerate}
\item for any $(x,y) \in N((x_1,y_1))$, $N\left[(x,y)\right] \nsubseteq N\left[(x_1,y_1)\right]$ and
\item for any $(x,y) \in N((x_2,y_2))$, $N\left[(x,y)\right] \nsubseteq N\left[(x_2,y_2)\right]$.
\end{enumerate}
Then, for any $(x,y) \notin  N\left((x_1,y_1) \right) \cup N\left((x_2,y_2\right))$, $(x,y) \in T_{G \bt H}((x_1,y_1),(x_2,y_2))$.
\end{lemma}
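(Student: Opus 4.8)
The plan is to translate the two neighbourhood hypotheses into usable structural facts and then to build an explicit tolled walk through the target vertex by routing it through two carefully chosen ``escape layers''. First I record the combinatorics of the strong product: $N_{G\boxtimes H}[(g,h)] = N_G[g]\times N_H[h]$. Since $y_1y_2\in E(H)$ and $(x_1,y_1),(x_2,y_2)$ are distinct and non-adjacent, a vertical or diagonal edge is excluded, so $x_1\neq x_2$ and $x_1x_2\notin E(G)$. Applying hypothesis (1) to the neighbour $(x_1,y_2)$ of $(x_1,y_1)$ gives $N_H[y_2]\nsubseteq N_H[y_1]$, hence an escape vertex $y_2^{+}\in N_H(y_2)\setminus N_H[y_1]$; symmetrically hypothesis (2) yields $y_1^{+}\in N_H(y_1)\setminus N_H[y_2]$. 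Applying the hypotheses to the neighbours $(x',y_1)$, $(x_1,y')$ (and their $(x_2,y_2)$-analogues) further shows that none of $x_1,x_2,y_1,y_2$ has a closed-neighbourhood-dominated neighbour in its factor, which is exactly what lets a walk leave a ball $N[(x_i,y_i)]$ in a controlled way.

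Next I isolate the genuinely new difficulty. Unlike in Section~\ref{interval}, here $y_1y_2\in E(H)$, so the terminal layers $G^{y_1}$ and $G^{y_2}$ are mutually adjacent: a vertex placed over $N_G[x_1]$ in $G^{y_2}$ is adjacent to $(x_1,y_1)$, and symmetrically for the other end. The escape vertices repair this. I route the first half of the walk in the layer $G^{y_1^{+}}$ and the second half in $G^{y_2^{+}}$: because $y_1^{+}\notin N_H[y_2]$, no vertex of the first half is adjacent to $(x_2,y_2)$, and because $y_2^{+}\notin N_H[y_1]$, no vertex of the second half is adjacent to $(x_1,y_1)$.

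Concretely, for a generic target ($x\notin N_G[x_1]\cup N_G[x_2]$ and $y\notin N_H[y_1]\cup N_H[y_2]$) I take a shortest $x_1,x_2$-walk $x_1,q_1,\dots,x,\dots,r_{t-1},x_2$ through $x$ and form the walk $(x_1,y_1),(q_1,y_1^{+}),\dots,(x,y_1^{+})$ in $G^{y_1^{+}}$ (entering the escape layer by a diagonal edge), then a $y_1^{+},y,y_2^{+}$-walk of vertices $(x,\cdot)$ in $^{x}\!H$ passing through $(x,y)$, then $(x,y_2^{+}),\dots,(r_{t-1},y_2^{+}),(x_2,y_2)$ in $G^{y_2^{+}}$ (leaving the escape layer by a diagonal edge). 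The shortest-walk choice guarantees that only the second vertex meets $N_G[x_1]$ and only the second-to-last meets $N_G[x_2]$, the diagonal shortcuts at both ends absorb exactly these contacts, and the layer choice kills all long-range adjacencies; one then checks that on the walk $(x_1,y_1)$ is adjacent only to $(q_1,y_1^{+})$ and $(x_2,y_2)$ only to $(r_{t-1},y_2^{+})$, so the walk is tolled. The remaining cases are the boundary ones, where $x\in N_G[x_1]$ (which by the hypothesis on the target forces $y\notin N_H[y_1]$) or $x\in N_G[x_2]$ (forcing $y\notin N_H[y_2]$), or where $y$ lies in $N_H[y_1]$ or $N_H[y_2]$; each is handled by the same construction after a diagonal shortening of the relevant end, exactly as in Lemmas~\ref{l:main2}, \ref{l:main3}, \ref{l:main4} and Lemma~\ref{l:equal}.

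The main obstacle I anticipate is the \emph{simultaneous} tolled condition at the two ends. Each single end is easy to clear, but the two escape layers $G^{y_1^{+}}$, $G^{y_2^{+}}$ and the two diagonal shortcuts interact, and I must verify in every boundary subcase that forcing the walk to be tolled at $(x_1,y_1)$ does not create a forbidden adjacency at $(x_2,y_2)$, and vice versa. The bookkeeping is heaviest when $x$ is adjacent to one of $x_1,x_2$, since then the $^{x}\!H$-segment itself can touch a terminal ball and must be shortened by a diagonal edge while keeping the opposite end intact; this is where the non-domination facts derived in the first step are used most delicately.
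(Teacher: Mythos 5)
Your overall strategy is the paper's own: the paper extracts the same two escape vertices (from the neighbours $(x_2,y_1)$ and $(x_1,y_2)$, whose non-domination forces $N_H[y_1]\nsubseteq N_H[y_2]$ and $N_H[y_2]\nsubseteq N_H[y_1]$, giving $y'\in N_H(y_1)\setminus N_H[y_2]$ and $y''\in N_H(y_2)\setminus N_H[y_1]$), and your generic walk --- diagonal edge into $G^{y_1^{+}}$, shortest-path segment to $(x,y_1^{+})$, middle segment in $^{x}\!H$ through $(x,y)$, shortest-path segment in $G^{y_2^{+}}$, diagonal edge into $(x_2,y_2)$ --- is exactly the paper's construction in its case $x\notin N(x_1)\cup N(x_2)$. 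There it is fully correct; indeed your restriction $y\notin N_H[y_1]\cup N_H[y_2]$ is not even needed in that case, since a middle vertex $(x,h)$ can only be adjacent to a terminal if $x\in N_G[x_1]$ resp.\ $x\in N_G[x_2]$, and your version has the small advantage of not needing the paper's preliminary split on $x\in T_G(x_1,x_2)$ (which the paper leaves as ``one can easily check'').

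The genuine gap is in the boundary cases, which you dismiss as ``the same construction after a diagonal shortening of the relevant end, exactly as in Lemmas~\ref{l:main2}, \ref{l:main3}, \ref{l:main4}''. Those constructions do not port, because they presuppose $y_1y_2\notin E(H)$. Concretely, when $x\in N_G(x_1)$ and $y_1y_2\in E(H)$, the walk of Lemma~\ref{l:main2} fails: its terminal segment runs in $G^{y_2}$ starting at $(x,y_2)$, and $(x,y_2)$ is now adjacent to $(x_1,y_1)$; likewise your own middle segment in $^{x}\!H$ breaks down, since \emph{every} vertex $(x,h)$ with $h\in N_H[y_1]$ is adjacent to $(x_1,y_1)$, and an arbitrary $y_1^{+},y,y_2^{+}$-walk in $H$ can meet $N_H[y_1]$ many times. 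A single diagonal shortening does not fix this; one must abandon the escape layers and re-route, e.g.\ enter $^{x}\!H$ diagonally along a shortest $y_1,y$-path and exit through $G^{y}$ (so that non-adjacency to the terminals is governed by $y\notin N_H[y_1]$ and by a shortest-path argument in $G$). This re-routing is precisely the content of the paper's second case: for $yy_2\notin E(H)$ it uses the walk through $^{x_1}\!H$, $G^{y}$ and $^{x_2}\!H$, and for $yy_2\in E(H)$ it needs the back-and-forth walk $(x_1,y_1),(x_1,y_2),(x_1,y),(x,y),(x_1,y),\ldots,(a,y),(x_2,y_2)$, where $a$ is the neighbour of $x_2$ on a shortest $x_1,x_2$-path. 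You correctly anticipated that this is the delicate point (``the $^{x}\!H$-segment itself can touch a terminal ball''), and tolled walks of the required kind do exist in every subcase, so the gap is fillable in your style --- but as written the appeal to the earlier lemmas is false, and the missing walks are exactly where the paper's proof does its distinctive work.
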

\begin{proof}
Since $(x_1,y_1),(x_2,y_2)$ are not adjacent, $x_1 \notin N[x_2]$.
Let $(x,y) \in V(G \bt H) \setminus (N\left((x_1,y_1) \right) \cup N\left((x_2,y_2\right)))$. If $x \in T_G(x_1,x_2)$, then one can easily check that $(x,y) \in T_{G \bt H}((x_1,$ $y_1),(x_2,y_2))$. Therefore we may assume that $x \notin T_G(x_1,x_2)$ and consequently $x \notin \{x_1,x_2\}$.  We will distinguish two cases.

Assume first that $x \notin N(x_1) \cup N(x_2).$ If $N\left[y_1\right] \subseteq N\left[y_2\right]$, this would imply that $N\left[(x_2,y_1)\right] \subseteq N\left[(x_2,y_2)\right]$, which can not be true by assumption number 2 of this theorem. Therefore $N\left[y_1\right] \nsubseteq N\left[y_2\right]$. Similar argument implies that $N\left[y_2\right] \nsubseteq N\left[y_1\right]$. Therefore, there exist $y' \in V(H)$, such  that $y'y_1\in E(H)$ but $y'y_2\notin E(H)$ and there exists $y''\in V(H)$, such that $y''y_2 \in E(H)$, but $y''y_1\notin E(H)$.  Let $P=x_1,t_1,\ldots , t_{k-1},x$ be a shortest $x_1,x$-path in $G$ and let $Q=y',z_2,\ldots , z_{l-1},y$ be a shortest $y',y$-path in $H$ and let $R=x,u_2,\ldots , u_{j-1},x_2$ be a shortest $x,x_2$-path in $G$. Then the following walk is a tolled $(x_1,y_1),(x_2,y_2)$-walk that contains $(x,y)$. Start from $(x_1,y_1)$ to $(t_1,y')$, then take $t_1,x$-subpath of $P$ in $G^{y'}$, follow  $z_2,y$-subpath of $Q$ in $^{x}\!H$, continue with a shortest $y,y''$-path in $^{x}\!H$ and finally follow $x,u_{j-1}$-subpath of $R$ in $G^{y''}$ and finish in $(x_2,y_2)$.

Suppose now that $x \in N(x_1) \cup N(x_2)$. Since $x \notin T_G(x_1,x_2)$, $x$ can not be adjacent to both $x_1$ and $x_2$. Without loss of generality assume that $xx_1 \in E(G)$ and $xx_2 \notin E(G)$. 
Note that $y\notin N_H[y_1]$, as $(x,y)(x_1,y_1) \notin E(G \bt H)$. If $yy_2 \notin E(H)$, then a tolled $(x_1,y_1),(x_2,y_2)$-walk that contains $(x,y)$ can be constructed in the following way. 
Start the walk with a shortest $y_1,y$-path in $^{x_1}\!H$. Then follow with $(x,y)$ and a shortest $x,x_2$-path in $G^{y}$. Continue with a shortest $y,y_2$-path in $^{x_2}\!H$. If $yy_2 \in E(G)$, then a tolled $(x_1,y_1),(x_2,y_2)$-walk that contains $(x,y)$ can be constructed in the following way. Start the walk with $(x_1,y_1),(x_1,y_2),(x_1,y),(x,y),(x_1,y)$ then take a shortest path to $(a,y)$ in $G^y$, where $a$ is a neighbor of $x_2$ on a shortest $x_1,x_2$-path in $G$. Finish the walk with $(x_2,y_2)$. \qed
\end{proof}

By Lemma \ref{l:equal}, Lemma \ref{l:neighbors} and by the symmetry of the strong product we have the following.

\begin{lemma}\label{l:neighbors2}
Let $G$ and $H$ be connected non complete graphs. Suppose that there exist two different, non adjacent vertices $(x_1,y_1),(x_2,y_2) \in V(G \bt H)$, where $x_1 \in N[x_2]$ such that
\begin{enumerate}
\item for any $(x,y) \in N((x_1,y_1))$, $N\left[(x,y)\right] \nsubseteq N\left[(x_1,y_1)\right]$ and
\item for any $(x,y) \in N((x_2,y_2))$, $N\left[(x,y)\right] \nsubseteq N\left[(x_2,y_2)\right]$.
\end{enumerate}
Then, for any $(x,y) \notin  N\left((x_1,y_1) \right) \cup N\left((x_2,y_2\right)),$ $(x,y) \in T_{G \bt H}((x_1,y_1),(x_2,y_2))$.
\end{lemma}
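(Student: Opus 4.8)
The plan is to deduce the statement from Lemma~\ref{l:equal} and Lemma~\ref{l:neighbors} by exploiting the commutativity of the strong product, $G \bt H \cong H \bt G$ via the isomorphism $\sigma(g,h)=(h,g)$. First I would record what the hypotheses force on the second coordinates. Since $(x_1,y_1)$ and $(x_2,y_2)$ are distinct and non-adjacent while $x_1\in N[x_2]$, a short inspection of the adjacency rule in $G\bt H$ shows that in each of the two possibilities, $x_1=x_2$ and $x_1x_2\in E(G)$, one necessarily has $y_1\neq y_2$ together with $y_1y_2\notin E(H)$; equivalently $y_1\notin N_H[y_2]$. This is the only genuinely computational step and it is routine.

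Next I would transport the configuration to $H\bt G$ through $\sigma$, so that the two vertices become $(y_1,x_1)$ and $(y_2,x_2)$. In $H\bt G$ the first factor is $H$, and its two leading coordinates $y_1,y_2$ are non-adjacent, which is exactly the standing role played by $x_1,x_2$ in Lemmas~\ref{l:equal} and~\ref{l:neighbors}. Because a graph isomorphism preserves closed neighborhoods and their containments, the hypotheses $(1)$ and $(2)$ carry over verbatim to the vertices $(y_1,x_1),(y_2,x_2)$ in $H\bt G$.

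Then I would split according to the two sub-cases isolated above. If $x_1=x_2$, the images $(y_1,x_1),(y_2,x_1)$ share their second coordinate, which is precisely the setting of Lemma~\ref{l:equal} applied to $H\bt G$; if instead $x_1x_2\in E(G)$, their second coordinates are adjacent in the second factor $G$, which is precisely the setting of Lemma~\ref{l:neighbors} applied to $H\bt G$, with the role of the edge $y_1y_2\in E(H)$ now played by $x_1x_2\in E(G)$. In either case the invoked lemma yields that every vertex of $H\bt G$ outside $N((y_1,x_1))\cup N((y_2,x_2))$ lies in $T_{H\bt G}((y_1,x_1),(y_2,x_2))$.

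Finally I would pull the conclusion back through $\sigma^{-1}$. Since $\sigma$ preserves adjacency it preserves neighborhoods, and since it is an isomorphism it preserves toll intervals, so that $\sigma^{-1}\!\left(T_{H\bt G}(\sigma(u),\sigma(v))\right)=T_{G\bt H}(u,v)$; hence any $(x,y)\notin N((x_1,y_1))\cup N((x_2,y_2))$ lies in $T_{G\bt H}((x_1,y_1),(x_2,y_2))$, as required. I do not expect a substantive obstacle here; the only thing demanding care is the bookkeeping of which factor plays which role after the swap—in particular matching the edge hypothesis of Lemma~\ref{l:neighbors} to $x_1x_2\in E(G)$ and checking that the two sub-cases $x_1=x_2$ and $x_1x_2\in E(G)$ are exhaustive under $x_1\in N[x_2]$.
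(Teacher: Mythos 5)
Your proposal is correct and takes essentially the same route as the paper, which proves Lemma~\ref{l:neighbors2} precisely ``by Lemma~\ref{l:equal}, Lemma~\ref{l:neighbors} and by the symmetry of the strong product,'' i.e.\ the swap isomorphism $\sigma(g,h)=(h,g)$ between $G \bt H$ and $H \bt G$ that you employ. You merely make explicit the details the paper leaves implicit: the exhaustive case split $x_1=x_2$ versus $x_1x_2\in E(G)$ (each forcing $y_1\notin N_H[y_2]$, so the transported vertices are distinct and non-adjacent as the two lemmas require) and the preservation of closed neighborhoods and toll intervals under $\sigma$.
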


\begin{theorem}\label{th:suf}
Let $G$ and $H$ be connected non complete graphs. If there exist two different non adjacent vertices $(x_1,y_1),(x_2,y_2) \in V(G \bt H)$ such that
\begin{enumerate}
\item for any $(x,y) \in N((x_1,y_1))$, $N\left[(x,y)\right] \nsubseteq N\left[(x_1,y_1)\right]$ and
\item for any $(x,y) \in N((x_2,y_2))$, $N\left[(x,y)\right] \nsubseteq N\left[(x_2,y_2)\right]$,
\end{enumerate}
then $tn(G \bt H)=2$.
\end{theorem}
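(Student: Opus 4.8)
The plan is to show that the two vertices $(x_1,y_1),(x_2,y_2)$ supplied by the hypothesis already form a toll set, i.e.\ that $T_{G\bt H}((x_1,y_1),(x_2,y_2))=V(G\bt H)$. Since $G$ and $H$ are non complete, $G\bt H$ has at least four vertices, and because a one-element set is a toll set only in a single-vertex graph, we have $tn(G\bt H)\geq 2$. As the two chosen vertices are non adjacent, proving that their toll interval is all of $V(G\bt H)$ then gives $tn(G\bt H)=2$. Thus it suffices to place every vertex $(x,y)$ on a tolled $(x_1,y_1),(x_2,y_2)$-walk, and I would split the argument according to whether $(x,y)$ lies outside or inside $N((x_1,y_1))\cup N((x_2,y_2))$.

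For the vertices outside $N((x_1,y_1))\cup N((x_2,y_2))$ I would invoke the machinery of Section~\ref{interval}. Since $(x_1,y_1)$ and $(x_2,y_2)$ are distinct and non adjacent, at least one coordinate pair is distinct and non adjacent in its factor, and one of the following situations occurs: both projection pairs are distinct and non adjacent (apply Corollary~\ref{p:1}); the $x$-projections are distinct and non adjacent while $y_1=y_2$ (apply Lemma~\ref{l:equal}); the $x$-projections are distinct and non adjacent while $y_1y_2\in E(H)$ (apply Lemma~\ref{l:neighbors}); or $x_1\in N[x_2]$, which forces the $y$-projections to be distinct and non adjacent (apply Lemma~\ref{l:neighbors2}). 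Each of these results uses precisely hypotheses~1 and~2 and concludes that every vertex not in $N((x_1,y_1))\cup N((x_2,y_2))$ lies in the toll interval.

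The heart of the proof is covering the neighbors, and here hypotheses~1 and~2 are exactly what is needed. Fix $(x,y)\in N((x_1,y_1))$ (the case $(x,y)\in N((x_2,y_2))$ being symmetric via hypothesis~2). On any tolled walk from $(x_1,y_1)$ a neighbor of $(x_1,y_1)$ can occur only as the second vertex, so $(x,y)$ must be forced into that slot. If moreover $(x,y)\in N((x_2,y_2))$, then the length-two walk $(x_1,y_1),(x,y),(x_2,y_2)$ is already tolled, since $(x_1,y_1)$ and $(x_2,y_2)$ are non adjacent. Otherwise $(x,y)\notin N((x_2,y_2))$, and by hypothesis~1 I would choose $w\in N((x,y))\setminus N[(x_1,y_1)]$; note $w\neq(x_2,y_2)$, as $(x,y)\notin N((x_2,y_2))$. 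If $w\in N((x_2,y_2))$, the walk $(x_1,y_1),(x,y),w,(x_2,y_2)$ is tolled. If $w\notin N[(x_2,y_2)]$, then $w$ lies outside $N((x_1,y_1))\cup N((x_2,y_2))$, so by the previous paragraph there is a tolled $(x_1,y_1),(x_2,y_2)$-walk $T$ through $w$; replacing the initial segment of $T$ up to its first occurrence of $w$ by $(x_1,y_1),(x,y),w$ yields the required walk through $(x,y)$. The prefix condition at $(x_1,y_1)$ survives because $(x,y)$ is the only prepended neighbor of $(x_1,y_1)$, while $w$ and all later vertices of $T$ (lying beyond its second position) avoid $N((x_1,y_1))$; the suffix condition at $(x_2,y_2)$ is inherited unchanged from $T$.

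I expect the main obstacle to be the bookkeeping of this splicing step: one must check that prepending $(x,y),w$ creates no illegitimate adjacency to either endpoint, in particular that neither $(x,y)$ nor $w$ is adjacent to $(x_2,y_2)$ (guaranteed by the case distinctions $(x,y)\notin N((x_2,y_2))$ and $w\notin N[(x_2,y_2)]$) and that $(x_1,y_1)$ is not revisited inside the retained suffix of $T$. Since adjacency in the strong product satisfies $N[(a,b)]=N_G[a]\times N_H[b]$, the relation $N[(x,y)]\subseteq N[(x_1,y_1)]$ is equivalent to $N_G[x]\subseteq N_G[x_1]$ and $N_H[y]\subseteq N_H[y_1]$, so hypothesis~1 lets me select $w$ explicitly in one of the two layers through $(x,y)$, keeping the verification of the tolled conditions transparent.
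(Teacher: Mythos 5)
Your proposal is correct, and its overall skeleton coincides with the paper's: you show the two hypothesized vertices themselves form a toll set, and for vertices outside $N((x_1,y_1))\cup N((x_2,y_2))$ you invoke exactly the same results (Corollary~\ref{p:1} together with Lemmas~\ref{l:equal}, \ref{l:neighbors}, \ref{l:neighbors2}), with the same exhaustive case split on the projections. Where you genuinely depart from the paper is in covering the neighbors of the endpoints. The paper works inside the product: from the witness $(x',y')\in N((x,y))\setminus N[(x_1,y_1)]$ it assumes without loss of generality $x'\notin N_G[x_1]$ and then constructs explicit tolled walks from shortest paths laid into layers, with a cascade of subcases on $y_1\notin N_H[y_2]$, $y'\notin N_H[y_1]$, and the delicate final subcase $y'\in N_H[y_1]$, which requires an auxiliary vertex $y''$ (adjacent to $y_2$ but not $y_1$ when $y_1y_2\in E(H)$, or at distance $2$ from $y_1$ when $y_1=y_2$). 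Your argument replaces all of this with a purely convexity-theoretic splice: pick $w\in N((x,y))\setminus N[(x_1,y_1)]$, close immediately if $w\in N((x_2,y_2))$, and otherwise graft $(x_1,y_1),(x,y),w$ onto a tolled walk through $w$ supplied by the already-settled outside case. The splice is sound: since adjacency to an endpoint is a property of the vertex and the tolled-walk condition is an ``if and only if'' over positions, any vertex adjacent to $(x_1,y_1)$ can occur only in the second position of $T$, so the first occurrence of $w$ lies at position at least two, the discarded prefix carries away the unique old neighbor of $(x_1,y_1)$, and the retained suffix inherits both endpoint conditions verbatim; your case hypotheses $(x,y)\notin N((x_2,y_2))$ and $w\notin N[(x_2,y_2)]$ guarantee the prepended vertices create no forbidden adjacency at the far end. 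What each approach buys: the paper's construction is self-contained and exhibits the walks concretely, at the price of heavy coordinate bookkeeping; yours is more modular and in fact proves a general graph-theoretic reduction (in any graph, if every vertex off $N(u)\cup N(v)$ lies in $T(u,v)$ and every neighbor of $u$, respectively $v$, has a neighbor outside $N[u]$, respectively $N[v]$, then $T(u,v)=V$), making the product structure irrelevant to this step and eliminating the paper's hardest subcase entirely.
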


\begin{proof}
We will prove that $\{(x_1,y_1),(x_2,y_2)\}$ is a toll set of $G \bt H$. Let $(x,y) \in V(G\bt H)$. If $(x,y)$ is not adjacent to neither $(x_1,y_1)$ nor $(x_2,y_2)$, then it follows from Lemmas~\ref{l:equal},~\ref{l:neighbors},~\ref{l:neighbors2} and Corollary~\ref{p:1} that $(x,y) \in T_G((x_1,y_1),(x_2,y_2))$. Therefore let $(x,y) \in N\left((x_1,y_1) \right) \cup N\left((x_2,y_2\right))$. 

Without loss of generality let $(x,y) \in N((x_1,y_1))$. Also, assume that $(x,y) \notin N((x_2,y_2))$. Note that if $(x,y) \in N((x_2,y_2))$, then $(x_1,y_1),(x,y),(x_2,y_2)$ is a tolled walk and we are done. 
By the assumption of the theorem, there exists $(x',y') \in V(G \bt H)$, which is adjacent to $(x,y)$, but it is not adjacent to $(x_1,y_1).$ Therefore $x' \notin N_G \left[ x_1 \right]$ or $y' \notin N_H \left[ y_1 \right]$. Without loss of generality we may assume that $x' \notin N_G \left[ x_1 \right]$. Let $P=x',t_1,\ldots , t_k,x_2$ be a shortest $x',x_2$-path in $G$ and $Q=y',z_1,\ldots , z_l,y_2$ a shortest $y',y_2$-path in $H$.

First assume that $y_1\notin N_H[y_2]$. Then a tolled walk between $(x_1,y_1),(x_2,y_2)$ that contains $(x,y)$ is the following. Start with $(x_1,y_1),(x,y),(x',y')$. If $(x',y')(x_2,y_2) \in E(G \bt H)$ then finish this walk in $(x_2,y_2)$, otherwise continue the walk with $z_1,z_l$-subpath of $Q$ in $^{x'}\!H$ and then with $t_1,x_2$-subpath of $P$ in $G^{y_2}$.

Finally, let $y_1 \in N_H[y_2]$. Since $(x_1,y_1),(x_2,y_2)$ are different and non adjacent $x_1\notin N_G[x_2]$. If $y' \notin N_H[y_1]$, then a desired walk is the following. Start with $(x_1,y_1),(x,y),(x',y')$. If $(x',y')(x_2,y_2) \in E(G \bt H)$, then finish this walk in $(x_2,y_2)$, otherwise continue the walk with $t_1,t_k$-subpath of $P$ in $G^{y'}$ and then with $z_1,y_2$-subpath of $Q$ in $^{x_2}H$.

To conclude the proof, let $y' \in N_H\left[ y_1 \right]$. 
If $y_1y_2 \in E(H)$, $N\left[y_2\right] \subseteq N\left[y_1\right]$ would imply $N\left[(x_1,y_2)\right] \subseteq N\left[(x_1,y_1)\right]$ (which is contrary to our assumption), thus $N\left[y_2\right] \nsubseteq N\left[y_1\right]$. Therefore, there exists  $y'' \in V(H)$ that is adjacent to $y_2$ but it is not adjacent to $y_1$. If $y_1=y_2$, then the assumption of the theorem implies that $N[y_1] \neq V(H)$. In this case let $y''$ be an arbitrary vertex at distance 2 from $y_1$ and let $y_1,a,y''$ be a shortest $y_1,y''$-path in $H$.
Let $R$ be a shortest $y',y''$-path that contains $y_2$.
Then a tolled $(x_1,y_1),(x_2,y_2)$-walk that contains $(x,y)$ is the following. Start with $(x_1,y_1),(x,y),(x',y')$. If $x'x_2 \in E(G)$ then finish this walk following a shortest $y',y_2$-path (starting in the neighbor of $y'$) in $^{x_2}H$ (note that when $y_1=y_2$ this path contains just vertex $(x_2,y_1)$). 
Otherwise, continue the walk following the path $R$ in $^{x'}\!H$, then follow $t_1,t_k$-subpath of $P$ in $G^{y''}$ and finish with $(x_2,y_2)$ if $y_1y_2 \in E(H)$ and with $(x_2,a),(x_2,y_1)$ if $y_1=y_2$.  \qed
\end{proof}

The proof of Theorem \ref{th:nec} and Theorem \ref{th:suf} give a characterization of strong product graphs with $tn(G \bt H)=2$.

\begin{corollary}
Let $G$ and $H$ be connected non complete graphs. Then  $tn(G \bt H)=2$ if and only if there exist two different non adjacent vertices $(x_1,y_1),(x_2,y_2) \in V(G \bt H)$ such that
\begin{enumerate}
\item for any $(x,y) \in N((x_1,y_1))$, $N\left[(x,y)\right] \nsubseteq N\left[(x_1,y_1)\right]$ and
\item for any $(x,y) \in N((x_2,y_2))$, $N\left[(x,y)\right] \nsubseteq N\left[(x_2,y_2)\right]$,
\end{enumerate}
\end{corollary}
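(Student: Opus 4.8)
The plan is to recognize that this corollary is nothing more than the conjunction of the two implications already established, so the entire task reduces to checking that the hypotheses of the two preceding theorems line up verbatim with the biconditional stated here. Since the statement is an ``if and only if'', I would split the argument into its two directions and dispatch each by a direct citation.

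For the forward direction I would assume $tn(G \bt H)=2$ and simply invoke Theorem~\ref{th:nec}. That theorem is stated under exactly the same standing hypothesis (both $G$ and $H$ connected and non complete) and asserts precisely that, when $tn(G \bt H)=2$, there exist two distinct non adjacent vertices $(x_1,y_1),(x_2,y_2)\in V(G\bt H)$ for which conditions (1) and (2) hold. As the two enumerated conditions in the corollary are word-for-word those of Theorem~\ref{th:nec}, no reformulation is needed and this half is immediate.

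For the backward direction I would assume the existence of a pair of distinct non adjacent vertices $(x_1,y_1),(x_2,y_2)$ satisfying (1) and (2), and then apply Theorem~\ref{th:suf}, whose hypothesis is literally the condition displayed in the corollary and whose conclusion is $tn(G\bt H)=2$. Again the match is exact, so this half also follows without further work.

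I do not expect any genuine obstacle here: the corollary is a purely formal combination, and all the substantive effort was expended earlier. In the backward (sufficiency) direction the real content lives in Corollary~\ref{p:1} together with Lemmas~\ref{l:equal}, \ref{l:neighbors}, and~\ref{l:neighbors2}, which cover the cases where $(x,y)$ is non adjacent to both chosen vertices across the various configurations of the projections ($y_1=y_2$, $y_1y_2\in E(H)$, $x_1\in N[x_2]$, and the generic case), and in the explicit tolled-walk constructions inside Theorem~\ref{th:suf} that settle the remaining neighbor cases. The forward (necessity) direction rests on the short observation in Theorem~\ref{th:nec} that a vertex $(x,y)$ with $N[(x,y)]\subseteq N[(x_1,y_1)]$ can never lie on a tolled walk emanating from $(x_1,y_1)$. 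Thus I would present the proof of this corollary as a one-line appeal to Theorem~\ref{th:nec} and Theorem~\ref{th:suf}.
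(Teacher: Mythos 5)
Your proposal is correct and matches the paper exactly: the paper derives this corollary as an immediate combination of Theorem~\ref{th:nec} (necessity) and Theorem~\ref{th:suf} (sufficiency), with no additional argument. Your one-line appeal to those two theorems is precisely the paper's intended proof.
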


Together with Theorem~\ref{atMost3} we also have a characterization of strong product graphs with toll number 3.

\section{Concluding remarks}\label{s:th}
In previous sections, the toll number of the strong product graphs was considered. Since toll number of a graph is an upper bound for the t-hull number of a graph, it is clear that $th(G \bt H) \leq 3$ for any non complete graphs $G$ and $H$.
t-hull number of the Cartesian product and the lexicographic product of non complete graphs was obtained in~\cite{gr} as an immediate consequence of the characterization of t-convex sets in those two graph products proved in~\cite{abg}. It was shown that $th(G \Box H)=th(G \circ H)=2$ (when $G$ and $H$ are not complete), since proper t-convex sets in the Cartesian product exist just if one factor is a complete graph and proper t-convex sets in the lexicographic product exist just if the second factor is a complete graph. In the case of the strong product, proper t-convex sets can also exist when both factors are non complete graphs. Therefore t-hull number can not be obtained in the same way as in the case of the Cartesian and the lexicographic product. Anyway, $th(G \bt H)=2$ if $G$ and $H$ are non complete graphs.

\begin{theorem}
Let $G$ and $H$ be connected non complete graphs. Then $th(G \boxtimes H) = 2$.
\end{theorem}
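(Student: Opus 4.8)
The plan is to prove the two inequalities $th(G \bt H)\ge 2$ and $th(G \bt H)\le 2$ separately. The lower bound is immediate: for a single vertex $v$ the only tolled walk from $v$ to itself is $v$, so $T_{G\bt H}[\{v\}]=\{v\}$ and $[\{v\}]_t=\{v\}\ne V(G\bt H)$; hence no singleton is a t-hull set and $th(G \bt H)\ge 2$. For the upper bound I would exhibit a two-element t-hull set. Fixing diametral vertices $x_1,x_2$ of $G$ and $y_1,y_2$ of $H$ (non adjacent, since $G,H$ are non complete), I set $S=\{(x_1,y_1),(x_2,y_2)\}$ and aim to show $[S]_t=V(G\bt H)$, using that $[S]_t=\bigcup_{k}T_{G\bt H}^k[S]$.

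Throughout I would exploit the elementary identity $N[(a,b)]=N[a]\times N[b]$ in the strong product, together with two covering facts. The first is Corollary~\ref{p:1}: for endpoints whose projections are non adjacent in both factors, the toll interval contains every common non-neighbor. The second is a same-layer analogue extracted from the proof of Lemma~\ref{l:equal}: if $p=(a,c)$, $q=(b,c)$ with $ab\notin E(G)$ and $c$ is not universal in $H$ (and symmetrically with the factors interchanged), then $T(p,q)$ contains every vertex that is a non-neighbor of both $p$ and $q$. The domination hypotheses of Lemma~\ref{l:equal} are used there only to guarantee that the shared coordinate is non-universal, which in my applications will be automatic.

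First, applying Corollary~\ref{p:1} to $(x_1,y_1),(x_2,y_2)$ shows that $T^1_{G\bt H}[S]$ contains all their common non-neighbors, in particular the two opposite corners $(x_2,y_1)$ and $(x_1,y_2)$. As $[S]_t$ is t-convex and now contains all four corners, I would apply the same-layer fact to the four corner pairs that share a coordinate (each shared coordinate is non-universal because $x_1x_2\notin E(G)$ and $y_1y_2\notin E(H)$). Using $N[(a,b)]=N[a]\times N[b]$, a short boolean check on adjacency to the four corners shows that a vertex $(x,y)$ escapes all four of these intervals only if it is adjacent to every corner, i.e. only if $x\in N[x_1]\cap N[x_2]$ and $y\in N[y_1]\cap N[y_2]$. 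Thus $[S]_t\supseteq V(G\bt H)\setminus U$, where $U=(N[x_1]\cap N[x_2])\times(N[y_1]\cap N[y_2])$; note every corner lies outside $U$, since e.g. $x_1\notin N[x_2]$.

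It remains to absorb $U$, and this is where I expect the real work to be. If $x_1,x_2$ (or $y_1,y_2$) lie at distance at least $3$, the corresponding set of common neighbors is empty and $U=\emptyset$, so the only genuine obstacle is the diameter-$2$ regime, where $U$ can be nonempty. Given $(x,y)\in U$: if $(x,y)$ is universal in $G\bt H$ then $(x_1,y_1),(x,y),(x_2,y_2)$ is a tolled walk, so $(x,y)\in T_{G\bt H}((x_1,y_1),(x_2,y_2))\subseteq[S]_t$. Otherwise, since $N[(x,y)]=N[x]\times N[y]$, one coordinate is non-universal, say $x$ is not universal in $G$; picking $x'$ non adjacent to $x$ (so $x'$ is itself non-universal), the pair $(x',y_1),(x',y_2)$ shares the non-universal coordinate $x'$, both its members lie in $V(G\bt H)\setminus U\subseteq[S]_t$ (as $y_1\notin N[y_2]$ and $y_2\notin N[y_1]$), and $(x,y)$ is a non-neighbor of both (adjacency would force $x\in N[x']$). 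The same-layer fact then yields $(x,y)\in T_{G\bt H}((x',y_1),(x',y_2))\subseteq[S]_t$, and the case where only $y$ is non-universal is symmetric. Hence $[S]_t=V(G\bt H)$ and $th(G\bt H)=2$. The crux is thus the diameter-$2$ case: there one cannot spread the generating pair apart to kill $U$, and must instead route each common-neighbor vertex through an auxiliary same-layer pair, which requires the bookkeeping above (non-universality of the shared coordinate and keeping the auxiliary endpoints out of $U$).
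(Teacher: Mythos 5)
Your proof is correct, but it takes a genuinely different route from the paper's. The paper's argument is a short bootstrap: by Corollary~\ref{p:1}, $(x_2,y_1)\in T_{G\bt H}((x_1,y_1),(x_2,y_2))$, so the t-convex hull of $S=\{(x_1,y_1),(x_2,y_2)\}$ contains $\{(x_1,y_1),(x_2,y_1),(x_2,y_2)\}$, which the proof of Theorem~\ref{atMost3} already showed to be a toll set; one more application of the closure finishes the proof. You never invoke Theorem~\ref{atMost3}; instead you run the two-step closure by hand: Corollary~\ref{p:1} puts all four corners into $T^1[S]$, a vertex-cover argument on the $4$-cycle of corner pairs shows that only vertices of $U=\left(N[x_1]\cap N[x_2]\right)\times\left(N[y_1]\cap N[y_2]\right)$ can escape the four same-layer intervals, and $U$ is absorbed separately. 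Two remarks. First, your ``same-layer fact'' is a legitimate strengthening of Lemma~\ref{l:equal}: inspecting its proof, the domination hypotheses are used only to produce $y'$ with $y_1y'\notin E(H)$, i.e.\ non-universality of the shared coordinate, so your extraction is sound --- but it is a statement the paper never isolates, so in a written version you should reprove it (or carefully justify the extraction) rather than cite Lemma~\ref{l:equal} as a black box. Second, your absorption of $U$ is far more complicated than necessary: every $(x,y)\in U$ is adjacent to both $(x_1,y_1)$ and $(x_2,y_2)$ (it cannot coincide with a corner, since $x_1\notin N[x_2]$ and $y_1 \notin N[y_2]$), so $(x_1,y_1),(x,y),(x_2,y_2)$ is already a tolled walk and $U\subseteq T^1[S]$ outright --- no universality split and no auxiliary pairs $(x',y_1),(x',y_2)$ are needed, so the ``real work'' you anticipated in the diameter-$2$ regime evaporates. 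In sum, your route buys independence from Theorem~\ref{atMost3} plus a reusable strengthened lemma; the paper's route buys brevity by recycling its toll-set construction.
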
    
 
\begin{proof}
Let $x_1, x_2$ be diametral vertices of $G$ and $y_1,y_2$ diametral vertices of $H$. Since $G$ and $H$ are not isomorphic to complete graphs, $d_G(x_1,x_2) \geq 2, d_H(y_1,y_2) \geq 2$.
Since any toll set is also t-hull set, it follows from the proof of Theorem~\ref{atMost3} that $\{(x_1,y_1),(x_2,y_1),(x_2,y_2)\}$ is a t-hull set of $G \bt H$. As $(x_2,y_1) \in T_{G \bt H}((x_1,y_1),(x_2,y_2))$, $\{(x_1,y_1),(x_2,y_2)\}$ is a t-hull set of $G \bt H$. \qed
\end{proof}


\section*{Acknowledgements}
Research of T.\ Gologranc was supported by Slovenian Research Agency under the grants N1-0043 and  P1-0297.


\end{document}